\newcommand{\m}{m}
\newcommand{\IZ}{\mathbb Z}
\newcommand{\IQ}{\mathbb Q}
\newcommand{\ord}{\mathop{\rm ord}\nolimits}
\newcommand{\Oseen}{{\mathcal{O}}}
\newcommand{\A}{{\mathfrak{A}}}
\newcommand{\D}{{\mathfrak{D}}}
\newcommand{\B}{{\mathfrak{B}}}
\newcommand{\Qbar}{\overline{\IQ}}
\newcommand{\Diff}{\D}
\begin{document}

\title{On certain infinite extensions of the rationals with Northcott property \thanks{This work has been financially supported by the Swiss National Science Foundation.}
}

\titlerunning{On infinite extensions with Northcott property}        

\author{Martin Widmer}


\institute{M. Widmer \at
             Department of Mathematics, University of Texas at Austin,\\
	     1 University Station C1200, Austin, TX 78712, USA\\
              \email{Widmer@math.utexas.edu}\\           
             \emph{Present address:}\\
              Institut f\"ur Mathematik A, Technische Universit\"at Graz, \\
	      Steyrergasse 30/II, 8010 Graz, Austria \\
              Tel.: +43-316-8737620\\
              Fax: +43-316-8737126 \\
              \email{Widmer@tugraz.at}     
           }

\date{Received: date / Accepted: date}

\maketitle

\begin{abstract}
A set of algebraic numbers has the Northcott property if each of its subsets of bounded Weil height is finite.
Northcott's Theorem, which has many Diophantine applications, states that sets of bounded degree have the Northcott property.
Bombieri, Dvornicich and Zannier raised the problem of finding fields of infinite degree with this property.
Bombieri and Zannier have shown that $\IQ_{ab}^{(d)}$, the maximal abelian subfield of the field generated by all algebraic numbers of degree at most $d$,
is such a field. In this note we give a simple criterion for the Northcott property and, as an application, we deduce several new examples, e.g. $\IQ(2^{1/d_1},3^{1/d_2},5^{1/d_3},7^{1/d_4},11^{1/d_5},...)$ has the Northcott property if and only if $2^{1/d_1},3^{1/d_2},5^{1/d_3},7^{1/d_4},11^{1/d_5},...$ tends to infinity.
\keywords{Northcott property \and height \and preperiodic points \and field arithmetic}
 \subclass{12F05 \and 11G50 }
\end{abstract}

\section{Introduction}
\label{intro}
Let $\mathcal{A}$ be a subset of the algebraic numbers $\Qbar$ and denote by $H(\cdot)$ the non-logarithmic
absolute Weil height on $\Qbar$ as defined in \cite{BG}.
Following Bombieri and Zannier \cite{BoZa} we say $\mathcal{A}$
has the Northcott property, short property $(N)$,
if for each positive real number $X$ there are only finitely many elements $\alpha$ in $\mathcal{A}$
with $H(\alpha)\leq X$. The $1$-dimensional version of Northcott's Theorem (see \cite{26} Theorem 1)
states that sets of algebraic numbers with uniformly bounded degree (over $\IQ$) have property $(N)$.
Northcott's Theorem has been used extensively, especially to deduce finiteness results in Diophantine geometry.
Other applications will be mentioned briefly in Section \ref{sec:0}.
Bombieri and Zannier \cite{BoZa} and more explicitly Dvornicich and Zannier (\cite{DvZa} p.165) proposed the problem of 
finding other fields than number fields with property $(N)$. 
In this note we give a simple sufficient criterion for an infinite extension of $\IQ$
to have property $(N)$.
Our criterion depends on the growth rate of certain discriminants. The method  
uses a lower bound due to Silverman for the height of an element generating the number field.
As an application we deduce property $(N)$ for several infinite extensions, here is just one example; with positive integers $d_i$ the extension $\IQ(2^{1/d_1},3^{1/d_2},5^{1/d_3},7^{1/d_4},11^{1/d_5},...)$ has property $(N)$ if and only if $2^{1/d_1},3^{1/d_2},5^{1/d_3},7^{1/d_4},11^{1/d_5},...$ tends to infinity.\\

For an arbitrary number field $K$ and a positive integer $d$
let $K^{(d)}$ be the compositum of all field extensions of $K$ of degree at most $d$.
Bombieri and Zannier \cite{BoZa} addressed the following question: \it does $K^{(d)}$ have property $(N)$? \rm 
So far the only contribution to this question is due to Bombieri and Zannier (\cite{BoZa} Theorem 1.1).
Let us write $K^{(d)}_{ab}$ for the compositum of all abelian extensions $F/K$ with $K\subseteq F\subseteq K^{(d)}$.
\begin{theorem}[Bombieri, Zannier]\label{ThBoZa}
The field $K^{(d)}_{ab}$ has property $(N)$, for any positive integer $d$.
\end{theorem}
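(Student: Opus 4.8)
The aim is to show that every $\alpha\in K^{(d)}_{ab}$ with $H(\alpha)\le X$ lies in one and the same number field depending only on $K,d,X$; Northcott's theorem then finishes the proof. We may assume $d\ge 2$, as $K^{(1)}_{ab}=K$ is a number field.

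\smallskip
\emph{Shape of the field, and the wild primes.} If $\alpha\in K^{(d)}_{ab}$ then $K(\alpha)/K$ is abelian and contained in $K^{(d)}$. A cyclic extension of $K$ of prime-power degree $\ell^{a}$ is simple, hence lies in $K^{(d)}$ once $\ell^{a}\le d$; conversely, if it lies in a compositum of extensions of degree $\le d$, passing to the Galois closure and using that the abelianisation of $S_{m}$ is $\IZ/2\IZ$ for every $m$ forces $\ell^{a}\le d$. Thus $G:=\mathrm{Gal}(K(\alpha)/K)$ has every Sylow subgroup of exponent $\le d$; set $e(d):=\prod_{\ell\le d}\ell^{\lfloor\log_{\ell}d\rfloor}$ for the resulting bound on $\exp(G)$, a property inherited by every inertia subgroup. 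Next, for a prime $\mathfrak p$ of $K$ with $\mathfrak P\mid\mathfrak p$ in $K(\alpha)$, the completion $K(\alpha)_{\mathfrak P}=K_{\mathfrak p}(\alpha)$ is contained in the compositum $K_{\mathfrak p}^{(d)}$ of all extensions of $K_{\mathfrak p}$ of degree $\le d$, a \emph{finite} extension of $K_{\mathfrak p}$ since a local field has only finitely many extensions of each bounded degree. As $K(\alpha)_{\mathfrak P}/K_{\mathfrak p}$ is abelian, it is one of finitely many abelian subextensions of $K_{\mathfrak p}^{(d)}$, so its conductor exponent is at most a constant $c(\mathfrak p,d)$; moreover, if $\mathfrak p$ lies above a rational prime $p_{0}>d$, its wild inertia is a $p_{0}$-group of exponent $\le d<p_{0}$, hence trivial, so $\mathfrak p$ is at most tamely ramified in $K(\alpha)/K$.

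\smallskip
\emph{A lower bound at tamely ramified primes — the heart of the argument.} Suppose $\mathfrak p$, above a rational prime $>d$, ramifies in $K(\alpha)/K$. The inertia group is cyclic, $I_{\mathfrak p}=\langle\sigma\rangle$ with $|\sigma|=n\ge 2$. For a character $\chi$ of $\langle\sigma\rangle$ form the Lagrange resolvent $\delta_{\chi}=\sum_{\tau\in\langle\sigma\rangle}\chi(\tau)^{-1}\tau(\alpha)\in K(\alpha,\zeta_{n})$, so $\delta_{\chi}^{\sigma}=\chi(\sigma)\delta_{\chi}$; since $\alpha$ generates $K(\alpha)/K$ it is not fixed by $\sigma\ne\mathrm{id}$, so $\mathrm{Tr}_{\langle\sigma\rangle}(\alpha)\ne n\alpha=\sum_{\chi}\delta_{\chi}$ and hence $\delta:=\delta_{\chi}\ne 0$ for some non-trivial $\chi$; let $n'\mid n$, $n'\ge 2$, be the order of $\chi(\sigma)$. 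Then $\gamma:=\delta^{\,n'}$ lies in the fixed field $F$ of $\sigma$. Extending $\sigma$ to an element of an inertia group of $K(\alpha,\zeta_{n})$ above $\mathfrak p$, it acts trivially on the corresponding residue field yet on $\delta$ by a root of unity of order $n'>1$; hence $\delta$ is a unit at no place of $K(\alpha,\zeta_{n})$ above $\mathfrak p$, and passing to $F$ gives $v_{\mathfrak q}(\gamma)\ne 0$ for every place $\mathfrak q$ of $F$ above $\mathfrak p$. Applying the product formula to $\gamma$ and $\gamma^{-1}$ and retaining only the places above $\mathfrak p$ yields
\[
2\,h(\gamma)\ \ge\ \frac{\log N\mathfrak p}{e(d)\,[K:\IQ]}.
\]
On the other hand $\delta$ is a sum of $n'\le e(d)$ conjugates of $\alpha$ times roots of unity, so $h(\delta)\le n'\,h(\alpha)+\log n'$, whence $h(\gamma)=n'\,h(\delta)\le e(d)^{2}h(\alpha)+e(d)\log e(d)$. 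Combining gives $\log N\mathfrak p\le c_{1}(d,K)\,h(\alpha)+c_{2}(d,K)$. I expect this step to be the main obstacle: it is essential that $\delta$ be a \emph{short} combination of the conjugates of $\alpha$, so that $h(\gamma)$ grows only linearly in $h(\alpha)$ — the tempting estimate $h(\alpha)\gg\log|d_{K(\alpha)}|/[K(\alpha):\IQ]^{2}$ via the discriminant is useless, since for composita of many small cyclic extensions $\log|d_{K(\alpha)}|$ is far smaller than $[K(\alpha):\IQ]^{2}$ — and one must verify with care both the non-vanishing of the resolvent and that it detects the tame ramification.

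\smallskip
\emph{Conclusion.} If $H(\alpha)\le X$, then by the previous step every tamely ramified prime $\mathfrak p$ of $K(\alpha)/K$ has $N\mathfrak p\le B$ for an explicit $B=B(d,K,X)$, with conductor exponent $1$ there; by the second step the finitely many ramified primes above rational primes $\le d$ have conductor exponent $\le c(\mathfrak p,d)$, and the ramified infinite places contribute exponent $1$. Hence the conductor of $K(\alpha)/K$ divides the fixed modulus
\[
\mathfrak m\ =\ \Bigl(\textstyle\prod_{\mathfrak p\mid p_{0},\ p_{0}\le d}\mathfrak p^{\,c(\mathfrak p,d)}\Bigr)\Bigl(\textstyle\prod_{N\mathfrak p\le B}\mathfrak p\Bigr)\textstyle\prod_{v\mid\infty}v,
\]
which depends only on $K,d,X$. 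By class field theory $K(\alpha)$ is contained in the ray class field $K^{\mathfrak m}$, a number field depending only on $K,d,X$. Northcott's theorem gives only finitely many $\alpha\in K^{\mathfrak m}$ with $H(\alpha)\le X$, and since every $\alpha\in K^{(d)}_{ab}$ with $H(\alpha)\le X$ lies in this same $K^{\mathfrak m}$, the field $K^{(d)}_{ab}$ has property $(N)$.
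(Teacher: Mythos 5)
This paper does not actually prove Theorem \ref{ThBoZa}: it is quoted from Bombieri and Zannier (\cite{BoZa}, Theorem 1.1), so there is no in-paper proof to compare against; your sketch has to be judged on its own, and in substance it reconstructs the known line of attack (bounded local degrees inside $K^{(d)}$, tameness above rational primes $>d$, a congruence at \emph{all} places above a ramified prime coming from abelianness, the product formula converting this into $\log N\mathfrak p\leq c_1(K,d)h(\alpha)+c_2(K,d)$, then bounded conductor, a fixed ray class field, and Northcott's theorem). The overall architecture is sound and the heart of the argument works: a lift $\tilde\sigma$ of $\sigma$ chosen \emph{inside} an inertia group of $K(\alpha,\zeta_n)/K$ above $\mathfrak p$ automatically fixes $\zeta_n$ (since $K(\zeta_n)/K$ is unramified at $\mathfrak p$, as $p_0\nmid n$), so $\tilde\sigma(\delta)=\chi(\sigma)\delta$ is legitimate; and because $K(\alpha,\zeta_n)/K$ is abelian all inertia groups above $\mathfrak p$ coincide, which is exactly what upgrades the congruence from one place to every place above $\mathfrak p$. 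You should say these two points explicitly -- the second is the only place where the abelian hypothesis is truly indispensable. Likewise, the constant in $2h(\gamma)\geq \log N\mathfrak p/(e(d)[K:\IQ])$ silently uses that the ramification indices above $\mathfrak p$ in $F/K$ are at most $e(d)$, which again follows from abelian plus tame (cyclic inertia of order dividing the exponent); and the estimate should read $h(\delta)\leq n\,h(\alpha)+\log n$ with $n$ (not $n'$) summands, which is harmless since $n\leq e(d)$.

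The one genuinely incorrect statement is the justification of the exponent bound: ``the abelianisation of $S_m$ is $\IZ/2\IZ$'' does not control abelian quotients of a subgroup of a product of subgroups of symmetric groups, and taken literally it would force abelian subextensions of $K^{(d)}$ to have exponent $2$, contradicting, say, cyclic cubic subfields of $\IQ^{(3)}$. The lemma you need is nevertheless true and easy: the Galois closure of a compositum of extensions of $K$ of degree at most $d$ has Galois group embedding into a product of subgroups of symmetric groups $S_m$ with $m\leq d$, hence of exponent dividing $\mathrm{lcm}(1,\dots,d)=e(d)$; every quotient, in particular $\mathrm{Gal}(K(\alpha)/K)$ and its inertia subgroups, inherits this, and a cyclic subextension of prime-power degree $\ell^a$ then satisfies $\ell^a\leq d$. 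With that repair, and the technical points above made explicit, your sketch is a correct proof of the theorem.
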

Since $K^{(2)}=K^{(2)}_{ab}$ Theorem \ref{ThBoZa} positively answers Bombieri and Zannier's question for $d=2$.
However, for $d>2$ the question whether $K^{(d)}$ has property $(N)$ remains open.
Another consequence of Theorem \ref{ThBoZa} is the following result.
\begin{corollary}[Bombieri, Zannier]\label{cor2BoZa}
For any positive integer $d$ the field 
\newline$\IQ(1^{1/d},2^{1/d},3^{1/d},4^{1/d},5^{1/d},...)$ has property $(N)$.
\end{corollary}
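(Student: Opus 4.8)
The plan is to exhibit $\IQ(1^{1/d},2^{1/d},3^{1/d},\dots)$ as a subfield of $K^{(d)}_{ab}$ for a suitably chosen number field $K$, and then combine Theorem \ref{ThBoZa} with the trivial observation that every subset of a set with property $(N)$ again has property $(N)$.

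First I would take $K=\IQ(\zeta_d)$, where $\zeta_d$ is a primitive $d$-th root of unity; since the $d$-th roots of $1$ generate $K$ over $\IQ$, we have $K\subseteq\IQ(1^{1/d},2^{1/d},3^{1/d},\dots)$. Next, for each positive integer $n$ I would observe that, because $K$ already contains all $d$-th roots of unity, the extension obtained by adjoining a $d$-th root $n^{1/d}$ is a Kummer extension $K(n^{1/d})/K$: its Galois group embeds into $\mu_d$, so it is cyclic of degree dividing $d$. In particular each $K(n^{1/d})$ is an abelian extension of $K$ of degree at most $d$, hence is one of the extensions entering the definition of $K^{(d)}$.

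Then I would pass to the compositum over all $n$. A (possibly infinite) compositum of abelian extensions of $K$ is again abelian over $K$, and it is contained in $K^{(d)}$ since each factor has degree at most $d$; therefore
\[
\IQ(1^{1/d},2^{1/d},3^{1/d},\dots)=K(2^{1/d},3^{1/d},\dots)\subseteq K^{(d)}_{ab}.
\]
By Theorem \ref{ThBoZa} the field $K^{(d)}_{ab}$ has property $(N)$, and since any subset of such a field inherits property $(N)$, the subfield $\IQ(1^{1/d},2^{1/d},3^{1/d},\dots)$ has it as well.

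The one point genuinely worth checking is the middle step: that enlarging the ground field from $\IQ$ to $K=\IQ(\zeta_d)$ converts each a priori non-Galois radical extension $\IQ(n^{1/d})$ into an \emph{abelian} extension of $K$ of bounded degree, so that the full field lands inside $K^{(d)}_{ab}$ and not merely inside $K^{(d)}$; the rest is formal. One should also note that the precise meaning of the symbols $n^{1/d}$ (a single chosen root versus the set of all $d$-th roots) is irrelevant here, because over $K$ adjoining one $d$-th root of $n$ automatically adjoins all of them.
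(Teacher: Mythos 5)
Your argument is correct and is essentially the derivation the paper intends (it gives none explicitly, attributing the corollary to Theorem \ref{ThBoZa}): pass to $K=\IQ(\zeta_d)$, note each $K(n^{1/d})$ is a Kummer, hence abelian, extension of $K$ of degree at most $d$, so the whole field lies in $K^{(d)}_{ab}$, and apply Theorem \ref{ThBoZa} together with heredity of property $(N)$ under subsets. One tiny caveat: under the paper's convention $1^{1/d}$ is the real root $1$, so your claimed equality $\IQ(1^{1/d},2^{1/d},\dots)=K(2^{1/d},\dots)$ need not hold, but only the inclusion $\IQ(1^{1/d},2^{1/d},\dots)\subseteq K^{(d)}_{ab}$ is needed, so the proof stands.
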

Dvornicich and Zannier (\cite{DvZa} Theorem 2.1) observed that by a small variation of Northcott's argument the ground field $\IQ$ in Northcott's Theorem can be replaced by any field with property $(N)$.
This turns out to be a very useful fact so that we state it explicitly as a theorem.
\begin{theorem}\label{ThDvZa}
Let $L$ be a field of algebraic numbers with property $(N)$ and let $d>0$ be an integer. The set of algebraic numbers of 
degree at most $d$ over $L$ has property $(N)$. In particular every finite extension of $L$ has the property $(N)$.
\end{theorem}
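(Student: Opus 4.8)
The plan is to transcribe Northcott's classical proof, invoking property $(N)$ of $L$ exactly where the original argument uses the finiteness of elements of bounded height in $\IQ$. Write $\mathcal{S}_d$ for the set of algebraic numbers of degree at most $d$ over $L$; these lie in $\Qbar$ because $L\subseteq\Qbar$. Fix $X>0$ (we may assume $X\ge 1$, since $H\ge 1$ everywhere) and let $\alpha\in\mathcal{S}_d$ with $H(\alpha)\le X$. Put $n=[L(\alpha):L]\le d$ and let $f(t)=t^n+a_{n-1}t^{n-1}+\dots+a_0\in L[t]$ be the minimal polynomial of $\alpha$ over $L$, with roots $\alpha=\alpha_1,\dots,\alpha_n$. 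Since $f$ divides the minimal polynomial of $\alpha$ over $\IQ$, each $\alpha_j$ is a Galois conjugate of $\alpha$ over $\IQ$; because the Weil height is invariant under $\mathrm{Gal}(\Qbar/\IQ)$, this forces $H(\alpha_j)=H(\alpha)\le X$ for all $j$.

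Next I would bound the heights of the coefficients of $f$. Up to sign, $a_i$ is an elementary symmetric function of $\alpha_1,\dots,\alpha_n$, hence a sum of at most $\binom{d}{i}\le 2^d$ products of at most $d$ of the $\alpha_j$. The standard elementary estimates $H(x_1\cdots x_k)\le\prod_{j}H(x_j)$ and $H(x_1+\dots+x_k)\le k\prod_{j}H(x_j)$ (see \cite{BG}) then give $H(a_i)\le 2^d X^d=:X'$ for every $i$, a bound depending only on $d$ and $X$.

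Now the hypothesis enters. Since $L$ has property $(N)$, the set $B=\{a\in L:H(a)\le X'\}$ is finite, so there are only finitely many monic polynomials over $L$ of degree at most $d$ with all coefficients in $B$. In particular only finitely many such polynomials can occur as the minimal polynomial over $L$ of an element $\alpha$ as above, and each of them has at most $d$ roots in $\Qbar$; hence $\{\alpha\in\mathcal{S}_d:H(\alpha)\le X\}$ is finite, which is property $(N)$ for $\mathcal{S}_d$. The last assertion is immediate: a finite extension $M/L$ of degree $d$ satisfies $M\subseteq\mathcal{S}_d$ as a set, and any subset of a set with property $(N)$ trivially has property $(N)$. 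I do not expect a genuine obstacle here --- the argument is a routine adaptation of Northcott's proof --- the only points needing care being the bookkeeping that conjugates of $\alpha$ over $L$ are conjugates over $\IQ$ (hence of equal height) and the elementary height inequalities for sums and products, both of which are entirely standard.
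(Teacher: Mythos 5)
Your proof is correct and is essentially the argument the paper intends: Theorem \ref{ThDvZa} is quoted from Dvornicich and Zannier, who obtain it exactly by running Northcott's classical proof with the ground field $\IQ$ replaced by $L$ — bounding the heights of the conjugates over $L$ (equal to $H(\alpha)$ since they are conjugates over $\IQ$), bounding the coefficients of the minimal polynomial over $L$, and invoking property $(N)$ of $L$ at the final counting step, just as you do. The only quibble is cosmetic: the naive sum and product inequalities you cite give a coefficient bound of the shape $2^d X^{d\cdot 2^d}$ rather than $2^d X^d$ (the sharper bound needs the place-by-place estimate), but since any bound depending only on $d$ and $X$ suffices, nothing in the argument is affected.
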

Taking a finite extension of a field with property $(N)$
is of course a very special case of taking the compositum of two fields with property $(N)$. So one might ask: is the property $(N)$
preserved under taking the compositum of two fields? We shall see that this is not always the case.\\

Before we state our own results let us fix some basic notation.
All fields are considered to lie in a fixed algebraic closure of $\IQ$.
For positive rational integers $a,b$ the expression $a^{1/b}$ denotes the real positive $b$-th root of $a$, unless stated otherwise.
By a prime ideal we always mean a non-zero prime ideal.
Let $F,M,K$ be number fields with $F\subseteq M \subseteq K$ and write $\Oseen_K$ for the ring of integers in $K$.
For a non-zero fractional ideal $\A$ of $\Oseen_K$ in $K$ let $D_{K/M}(\A)$
be the discriminant-ideal of $\A$ relative to $M$ (for the definition see \cite{13} p.65)	
and write $D_{K/M}$ for $D_{K/M}(\Oseen_K)$ (see also \cite{Neu} p.201).
Let us denote by $N_{M/F}(\cdot)$ the norm from $M$ to $F$ as defined in \cite{13} p.24. Then we have
\begin{alignat}1
\label{refdis}
D_{K/F}=D_{M/F}^{[K:M]}N_{M/F}(D_{K/M})
\end{alignat}
(see \cite{Neu} (2.10) Corollary p.202).
For a non-zero fractional ideal $\B$ of $\Oseen_M$ in $M$ we interpret the principal ideal $N_{M/\IQ}(\B)$ 
as the unique positive rational generator of this ideal.
Note that $D_{K/M}$ is an integral ideal
in $\Oseen_{M}$ and thus 
$N_{M/\IQ}(D_{K/M})$ is in $\IZ$.
We write $\Delta_K$ for the absolute discriminant of $K$ so that $D_{K/\IQ}$ is the principal ideal generated by 
$\Delta_K$. In particular (\ref{refdis}) yields
\begin{alignat}1
\label{dis}
|\Delta_{K}|=|\Delta_{M}|^{[K:M]}N_{M/\IQ}(D_{K/M}).
\end{alignat}
We will also frequently use the following fact (see \cite{Hilbert} Theorem 85 p.97): let $F,K$ be two number fields.
A prime $p$ in $\IZ$ ramifies in the compositum of $F$ and $K$ if and only if
it ramifies in $F$ or in $K$.\\

So far Theorem \ref{ThBoZa}, and its immediate consequences, were the only sources for fields of infinite degree with property $(N)$.
Our first result is a simple but rather general criterion for the property $(N)$ concerning subfields of $\Qbar$. Roughly speaking it states that the union of fields in a saturated (i.e. without intermediate fields) nested sequence of number fields with enough ramification at each step has property $(N)$. 
\begin{theorem}\label{Th1}
Let $K$ be a number field, let $K=K_0\subsetneq K_1\subsetneq K_2\subsetneq....$ be a nested sequence of finite extensions and set $L=\bigcup_{i}K_i$. Suppose that
\begin{alignat}1\label{reldiscond0}
\inf_{K_{i-1}\subsetneq M \subseteq K_i}\left(N_{K_{i-1}/\IQ}(D_{M/K_{i-1}})\right)^{\frac{1}{[M:K_0][M:K_{i-1}]}}\longrightarrow \infty
\end{alignat}
as $i$ tends to infinity where the infimum is taken over all intermediate fields $M$ strictly larger than $K_{i-1}$.
Then  the field $L$ has the Northcott property.
\end{theorem}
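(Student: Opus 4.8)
The plan is to argue by contradiction. Assume $L$ fails property $(N)$: there is a real number $X$ and infinitely many distinct $\alpha_1,\alpha_2,\dots\in L$ with $H(\alpha_\ell)\le X$. Every $K_i$ is a number field, hence of bounded degree, hence has property $(N)$ by Northcott's Theorem; so for each $i$ only finitely many of the $\alpha_\ell$ lie in $K_i$. Letting $j_\ell$ be the smallest index with $\alpha_\ell\in K_{j_\ell}$, the indices $j_\ell$ are therefore unbounded, and after passing to a subsequence I may assume $j_\ell\to\infty$ and $\alpha_\ell\in K_{j_\ell}\setminus K_{j_\ell-1}$. Put $M_\ell:=K_{j_\ell-1}(\alpha_\ell)$; then $K_{j_\ell-1}\subsetneq M_\ell\subseteq K_{j_\ell}$, so $M_\ell$ is one of the fields over which the infimum in (\ref{reldiscond0}) is taken at $i=j_\ell$, whence
\[
\left(N_{K_{j_\ell-1}/\IQ}(D_{M_\ell/K_{j_\ell-1}})\right)^{\frac{1}{[M_\ell:K_0][M_\ell:K_{j_\ell-1}]}}\ \ge\ \inf_{K_{j_\ell-1}\subsetneq M\subseteq K_{j_\ell}}\left(N_{K_{j_\ell-1}/\IQ}(D_{M/K_{j_\ell-1}})\right)^{\frac{1}{[M:K_0][M:K_{j_\ell-1}]}}\ \longrightarrow\ \infty .
\]
Thus it suffices to bound the left-hand side from above by a constant depending only on $X$ and $K$; that is the contradiction.

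For the upper bound, write $d:=[\IQ(\alpha_\ell):\IQ]$. By (\ref{dis}) applied to $\IQ\subseteq K_{j_\ell-1}\subseteq M_\ell$ one has $N_{K_{j_\ell-1}/\IQ}(D_{M_\ell/K_{j_\ell-1}})=|\Delta_{M_\ell}|/|\Delta_{K_{j_\ell-1}}|^{[M_\ell:K_{j_\ell-1}]}$, while the standard bound for the discriminant of a compositum applied to $M_\ell=\IQ(\alpha_\ell)\cdot K_{j_\ell-1}$ gives $|\Delta_{M_\ell}|\le|\Delta_{\IQ(\alpha_\ell)}|^{[M_\ell:\IQ(\alpha_\ell)]}\,|\Delta_{K_{j_\ell-1}}|^{[M_\ell:K_{j_\ell-1}]}$ (here the statement, recalled in the excerpt, that a rational prime ramifies in a compositum iff it ramifies in one of the factors is used in its quantitative form). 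Dividing, the potentially enormous factor $|\Delta_{K_{j_\ell-1}}|^{[M_\ell:K_{j_\ell-1}]}$ cancels and
\[
N_{K_{j_\ell-1}/\IQ}(D_{M_\ell/K_{j_\ell-1}})\ \le\ |\Delta_{\IQ(\alpha_\ell)}|^{\,[M_\ell:\IQ(\alpha_\ell)]},
\]
and (if needed for the endgame) one keeps the sharper inequality with the relative discriminant $D_{\IQ(\alpha_\ell)/(\IQ(\alpha_\ell)\cap K_{j_\ell-1})}$ in place of $\Delta_{\IQ(\alpha_\ell)}$, so that the discriminant of $K_{j_\ell-1}$ never reappears. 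Now apply Silverman's lower bound for the Weil height of a generator of a number field, which gives $|\Delta_{\IQ(\alpha_\ell)}|\le d^{d}H(\alpha_\ell)^{2d(d-1)}$. Inserting this and using $[M_\ell:\IQ(\alpha_\ell)]=[M_\ell:\IQ]/d$ and $[M_\ell:K_0]=[M_\ell:\IQ]/[K:\IQ]$, the exponent $1/([M_\ell:K_0][M_\ell:K_{j_\ell-1}])$ cancels the factor $[M_\ell:\IQ]$ together with the quadratic-in-$d$ growth of Silverman's bound, leaving a quantity of the shape $d^{\,[K:\IQ]/[M_\ell:K_{j_\ell-1}]}\,H(\alpha_\ell)^{2[K:\IQ](d-1)/[M_\ell:K_{j_\ell-1}]}$; with $H(\alpha_\ell)\le X$ a careful analysis of the remaining degrees shows this stays bounded in terms of $X$ and $[K:\IQ]$ alone.

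The step I expect to be the main obstacle is exactly this last piece of bookkeeping: one must make the discriminant estimate sharp enough and dovetail it precisely with the shape of Silverman's inequality, so that the degrees $[\IQ(\alpha_\ell):\IQ]$, $[M_\ell:\IQ]$ and $[M_\ell:K_{j_\ell-1}]$ cancel against the exponent in (\ref{reldiscond0}) in just the right way---in particular so that only the "new'' relative discriminant contributes and never the absolute discriminant of $K_{j_\ell-1}$. A second, essential subtlety is that $\alpha_\ell$ need not be an algebraic integer; this is precisely why one invokes Silverman's bound, which is uniform in whether $\alpha_\ell$ is integral, rather than estimating the relevant relative discriminant directly through the conjugates of $\alpha_\ell$, since "clearing denominators'' would introduce an extra factor growing with $[\IQ(\alpha_\ell):\IQ]$ that cannot be absorbed into the exponent.
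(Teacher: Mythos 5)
Your skeleton (argue by contradiction, reduce to the intermediate field $M_\ell=K_{j_\ell-1}(\alpha_\ell)$, and play a Silverman-type discriminant--height inequality against the exponent in (\ref{reldiscond0})) is in substance the paper's proof, which packages the reduction as Proposition \ref{propobserv} and then applies (\ref{sil}). The gap is in your final bookkeeping, and it is not a matter of care: the bound you reach is genuinely not bounded. Write $d=[\IQ(\alpha_\ell):\IQ]$, $m=[M_\ell:K_{j_\ell-1}]$, $n=[M_\ell:\IQ]$, $k_0=[K:\IQ]$. Your chain gives
\[
N_{K_{j_\ell-1}/\IQ}(D_{M_\ell/K_{j_\ell-1}})^{\frac{k_0}{nm}}\ \le\ \left(d^{d}X^{2d(d-1)}\right)^{\frac{n}{d}\cdot\frac{k_0}{nm}}\ =\ d^{\frac{k_0}{m}}\,X^{\frac{2k_0(d-1)}{m}},
\]
which is bounded in terms of $X$ and $k_0$ only if $(d-1)/m$ is bounded. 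Nothing forces that: always $m\le d$, but $d$ can be arbitrarily much larger than $m$ (an element of small height and huge degree over $\IQ$ may generate only a small-degree extension of $K_{j_\ell-1}$, e.g.\ a sum of an element of $K_{j_\ell-1}$ of huge degree but small height and an element of degree $2$ over $K_{j_\ell-1}$), so no contradiction follows. Your proposed sharpening, keeping $D_{\IQ(\alpha_\ell)/(\IQ(\alpha_\ell)\cap K_{j_\ell-1})}$, does not repair this: in non-Galois situations $[\IQ(\alpha_\ell):\IQ(\alpha_\ell)\cap K_{j_\ell-1}]$ can also exceed $m$ (already $E=\IQ(2^{1/3})$ against a field containing $\omega 2^{1/3}$ gives $[E:E\cap F]=3$ while $[EF:F]=2$), so the same uncontrolled ratio reappears.

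The repair is to drop the detour through $\Delta_{\IQ(\alpha_\ell)}$ and apply Silverman's inequality \emph{relative to the base} $K_{j_\ell-1}$, which is exactly what the paper does. With $e=[K_{j_\ell-1}:\IQ]$ and $\alpha_\ell$ a generator of $M_\ell/K_{j_\ell-1}$, (\ref{sil}) rearranges to
\[
N_{K_{j_\ell-1}/\IQ}(D_{M_\ell/K_{j_\ell-1}})\ \le\ m^{em}\,H(\alpha_\ell)^{2em(m-1)},
\]
and raising to the power $\frac{1}{[M_\ell:K_0][M_\ell:K_{j_\ell-1}]}=\frac{k_0}{em^2}$ gives at most $m^{k_0/m}X^{2k_0(m-1)/m}\le 2^{k_0}X^{2k_0}$: only $m$, never $d$, enters, so all degrees cancel against the exponent in (\ref{reldiscond0}) and the infimum at $i=j_\ell$ stays bounded while the hypothesis forces it to infinity --- the desired contradiction. (This is the content of the paper's inequality (\ref{delta}); note also that the relative Silverman statement takes care of non-integral $\alpha_\ell$ for free, which disposes of the second subtlety you raise.) As written, however, your proof does not close.
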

If the nested sequence of number fields is saturated then (\ref{reldiscond0}) simplifies to
\begin{alignat}1\label{reldiscond}
N_{K_{i-1}/\IQ}(D_{K_i/K_{i-1}})^{\frac{1}{[K_{i}:K_0][K_i:K_{i-1}]}}\longrightarrow \infty.
\end{alignat}
In the sequel we give several applications of Theorem \ref{Th1}.
For a number field $K$ and a prime ideal $\wp$ of $\Oseen_K$ we say $D=x^d+a_{1}x^{d-1}+...+a_{d}$ in $\Oseen_K[x]$
is a $\wp$-Eisenstein polynomial if $a_j\in \wp$ for $1\leq j\leq d$ and $a_d\notin \wp^2$. Such a polynomial is irreducible over $K$
(see \cite{Marcus} p.256).
As a consequence of Theorem \ref{Th1} we deduce the following theorem.
\begin{theorem}\label{corEisenstein}
Let $K$ be a number field,
let $p_1,p_2,p_3,...$ be a sequence of positive prime numbers
and for $i=1,2,3,...$ let $D_i$ be a $p_i$-Eisenstein polynomial in $\IZ[x]$. 
Denote $\deg D_i=d_i$ and let $\alpha_i$ be any root of $D_i$.
Moreover suppose that $p_i\nmid \Delta_{\IQ(\alpha_j)}$ for $1\leq j<i$ and that $p_i^{{1}/{d_i}}\longrightarrow \infty$
as $i$ tends to infinity.
Then  the field $K(\alpha_1,\alpha_2,\alpha_3,...)$ has the Northcott property.
\end{theorem}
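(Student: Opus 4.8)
The plan is to apply Theorem \ref{Th1} to a carefully chosen nested sequence of number fields built from the $\alpha_i$. The natural candidate is $K_0 = K$ and $K_i = K(\alpha_1, \dots, \alpha_i)$, so that $L = \bigcup_i K_i = K(\alpha_1, \alpha_2, \dots)$ as required. The issue is that this sequence need not be \emph{saturated}, so I must verify the full hypothesis \eqref{reldiscond0} with the infimum over all intermediate fields $M$ with $K_{i-1} \subsetneq M \subseteq K_i$, rather than the simpler \eqref{reldiscond}.

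\medskip

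\noindent\textbf{Controlling ramification.} The key point is that $\alpha_i$ is a root of a $p_i$-Eisenstein polynomial $D_i \in \IZ[x]$, hence $p_i$ is totally ramified in $\IQ(\alpha_i)$ with ramification index $d_i$; equivalently $p_i^{d_i - 1} \mid \Delta_{\IQ(\alpha_i)}$ (indeed $p_i$ is totally ramified, so the $\wp$ above $p_i$ contributes at least $d_i - 1$ to the exponent of $p_i$ in $\Delta_{\IQ(\alpha_i)}$). By the hypothesis $p_i \nmid \Delta_{\IQ(\alpha_j)}$ for $j < i$, and using the compositum-ramification fact quoted in the excerpt (a prime ramifies in a compositum iff it ramifies in one of the factors), $p_i$ is unramified in $K_{i-1}$ but totally ramified, with large ramification, in $K_i = K_{i-1}(\alpha_i)$. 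More precisely, for any intermediate field $M$ with $K_{i-1} \subsetneq M \subseteq K_i$, I will argue that a prime $\wp$ of $\Oseen_{K_{i-1}}$ above $p_i$ must ramify in $M$ with index divisible by a nontrivial factor, so that $N_{K_{i-1}/\IQ}(D_{M/K_{i-1}})$ is divisible by a suitable power of $p_i$. The cleanest route is: since $[M : K_{i-1}]$ divides... no — rather, since $M \subseteq K_{i-1}(\alpha_i)$ and $K_{i-1}(\alpha_i)/K_{i-1}$ is totally ramified at $\wp \mid p_i$ of degree $d_i$ (Eisenstein over $K_{i-1}$ too, as $p_i$ is unramified there), every subextension $M/K_{i-1}$ is also totally ramified at the prime below $\wp$. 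Hence the exponent of $p_i$ in $N_{K_{i-1}/\IQ}(D_{M/K_{i-1}})$ is at least $f \cdot ([M:K_{i-1}] - 1) \cdot [K_{i-1}:\IQ]/[K_{i-1}:\IQ]$ — I will extract that $v_{p_i}\bigl(N_{K_{i-1}/\IQ}(D_{M/K_{i-1}})\bigr) \geq [K_{i-1}:\IQ]\cdot([M:K_{i-1}]-1)/e$ where $e=1$ is the ramification of $p_i$ in $K_{i-1}$; the upshot is a bound of the shape $N_{K_{i-1}/\IQ}(D_{M/K_{i-1}}) \geq p_i^{\,c\,[M:K_{i-1}]}$ for an absolute constant $c > 0$ (morally $c$ close to $[K_{i-1}:\IQ]$, and certainly $\geq 1$ once $[M:K_{i-1}] \geq 2$).

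\medskip

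\noindent\textbf{Checking the growth condition.} With such a bound, the quantity inside \eqref{reldiscond0} is at least
\[
\left(p_i^{\,c\,[M:K_{i-1}]}\right)^{\frac{1}{[M:K_0][M:K_{i-1}]}} = p_i^{\frac{c}{[M:K_0]}} \geq p_i^{\frac{c}{[K_i:K_0]}} \geq p_i^{\frac{c}{[K:\IQ]\, d_1 d_2 \cdots d_i}},
\]
which is \emph{not} obviously going to infinity — the exponent shrinks. So a cruder bound is not enough; I need the exponent of $p_i$ in the relative discriminant to grow like $d_i$ (the full degree), which it does: $v_{p_i}(D_{M/K_{i-1}})$ along the prime $\wp$ is $[M:K_{i-1}]-1$ per prime, summed over the $[K_{i-1}:\IQ]$ primes above $p_i$, giving the norm-exponent roughly $[K_{i-1}:\IQ]([M:K_{i-1}]-1)$. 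Dividing by $[M:K_0][M:K_{i-1}] = [K:\IQ][K_{i-1}:\IQ]^{?}\cdots$ — here I must be careful: $[M:K_0] = [K:\IQ][M:\IQ]/[\IQ:\IQ]$... rather $[M:K_0]=[M:K]$, and $[M:K] = [M:K_{i-1}][K_{i-1}:K]$. So the exponent of $p_i$ is
\[
\frac{[K_{i-1}:\IQ]\,([M:K_{i-1}]-1)}{[M:K_{i-1}][K_{i-1}:K]\,[M:K_{i-1}]},
\]
and since $[K_{i-1}:\IQ]/[K_{i-1}:K] = [K:\IQ]$ is fixed, this is $\asymp [K:\IQ]\,([M:K_{i-1}]-1)/[M:K_{i-1}]^2 \geq [K:\IQ]/(2[M:K_{i-1}]) \geq [K:\IQ]/(2 d_i)$ in the worst case $[M:K_{i-1}]=d_i$ — still not going to infinity! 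This tells me the worst case is $M = K_i$ itself with $[M:K_{i-1}]=d_i$, and there the bound on the bracket is $p_i^{\,c/d_i \cdot (\text{something})}$ which fails. \textbf{The main obstacle}, therefore, is that Eisenstein gives only total ramification with exponent $d_i - 1$, making the normalized discriminant grow like $p_i^{1/d_i}$-ish — so I cannot win unless I use $p_i^{1/d_i} \to \infty$, which is precisely the hypothesis. Re-examining: $\bigl(N_{K_{i-1}/\IQ}(D_{M/K_{i-1}})\bigr)^{1/([M:K_0][M:K_{i-1}])}$; the numerator's $p_i$-exponent is $\geq [K_{i-1}:\IQ]([M:K_{i-1}]-1)$, so the bracket is $\geq p_i^{[K_{i-1}:\IQ]([M:K_{i-1}]-1)/([M:K][M:K_{i-1}])}$. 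Now $[M:K] \leq [K_i:K] = d_1\cdots d_i$ is the problem — it is unbounded. I resolve this by noting $[M:K] = [M:K_{i-1}]\cdot[K_{i-1}:K]$, so the exponent is $([K_{i-1}:\IQ]/[K_{i-1}:K])\cdot([M:K_{i-1}]-1)/[M:K_{i-1}]^2 = [K:\IQ]([M:K_{i-1}]-1)/[M:K_{i-1}]^2$. For $[M:K_{i-1}] = t$ with $2 \leq t \leq d_i$, $(t-1)/t^2 \geq 1/(2t) \geq 1/(2d_i)$. So the bracket is $\geq p_i^{[K:\IQ]/(2d_i)}$, and since $[K:\IQ] \geq 1$, this is $\geq p_i^{1/(2d_i)} = (p_i^{1/d_i})^{1/2} \to \infty$ by hypothesis. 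Thus \eqref{reldiscond0} holds and Theorem \ref{Th1} applies, completing the proof. The only genuinely delicate step is the local computation showing $v_{p_i}\bigl(N_{K_{i-1}/\IQ}(D_{M/K_{i-1}})\bigr) \geq [K_{i-1}:\IQ]([M:K_{i-1}]-1)$ — i.e., that $p_i$, being unramified in $K_{i-1}$ and such that $D_i$ remains Eisenstein over each prime $\wp \mid p_i$ of $\Oseen_{K_{i-1}}$, forces every intermediate $M/K_{i-1}$ to be totally ramified at $\wp$ — which follows from multiplicativity of ramification indices in towers and the fact that a totally ramified extension has every subextension totally ramified.
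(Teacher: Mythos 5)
Your proposal follows essentially the same route as the paper's proof: the tower $K_0=K$, $K_i=K_{i-1}(\alpha_i)$; the observation that $D_i$ remains $\wp$-Eisenstein in $\Oseen_{K_{i-1}}[x]$ for every prime $\wp$ of $\Oseen_{K_{i-1}}$ above $p_i$ because $p_i$ is unramified in $K_{i-1}$, hence every intermediate field $K_{i-1}\subsetneq M\subseteq K_i$ is totally ramified above $p_i$; the resulting divisibility $p_i^{[K_{i-1}:\IQ](m-1)}\mid N_{K_{i-1}/\IQ}(D_{M/K_{i-1}})$ with $m=[M:K_{i-1}]$; and the exponent bookkeeping $[K_{i-1}:\IQ](m-1)/([M:K_0]m)=[K_0:\IQ](m-1)/m^2\geq 1/(2d_i)$, giving the lower bound $p_i^{1/(2d_i)}$ which is fed into Theorem \ref{Th1} via hypothesis (\ref{reldiscond0}). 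This is exactly the paper's argument.

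The one genuine oversight is the assertion that $p_i\nmid\Delta_{\IQ(\alpha_j)}$ for $j<i$ together with the compositum-ramification fact already makes $p_i$ unramified in $K_{i-1}$: the field $K_{i-1}$ is the compositum of $K,\IQ(\alpha_1),\dots,\IQ(\alpha_{i-1})$, and nothing in the hypotheses excludes $p_i\mid\Delta_K$. For such an index the key step breaks: if $e(\wp\mid p_i)\geq 2$ in $K_{i-1}$ then the constant term of $D_i$ lies in $\wp^2$, so $D_i$ need not be $\wp$-Eisenstein, and neither its irreducibility over $K_{i-1}$ nor total ramification of $\wp$ in $K_i/K_{i-1}$ follows. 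This only affects finitely many $i$, since $p_i\geq p_i^{1/d_i}\to\infty$ forces $p_i\nmid\Delta_K$ (hence $p_i\nmid\Delta_{K_{i-1}}$) for all sufficiently large $i$; the paper repairs precisely this point by re-indexing the tower so that $p_i$ is unramified in $K_{i-1}$ for every $i\geq 1$, and since (\ref{reldiscond0}) only concerns the limit as $i\to\infty$, restricting to large $i$ suffices. With that one-line fix your proof is complete.
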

Theorem \ref{corEisenstein} implies a refinement of Corollary \ref{cor2BoZa}. This refinement
shows that the condition $p_i^{{1}/{d_i}}\longrightarrow \infty$ in Theorem \ref{corEisenstein} cannot be weakened.
\begin{corollary}\label{cor2}
Let $K$ be a number field,
let $p_1<p_2<p_3<...$ be a sequence of positive primes
and let $d_1,d_2,d_3,...$ be a sequence of positive integers.
Then  the field $K(p_1^{1/d_1},p_2^{1/d_2},p_3^{1/d_3},...)$ has the Northcott property if and only if
$|p_i^{{1}/{d_i}}|\longrightarrow \infty$ as $i$ tends to infinity. Here $p_i^{{1}/{d_i}}$ is any $d_i$-th root of $p_i$ and
$|\cdot|$ denotes the complex modulus.
\end{corollary}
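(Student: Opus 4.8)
The forward direction is almost immediate from Theorem \ref{corEisenstein}. Suppose $|p_i^{1/d_i}| \to \infty$; since $|p_i^{1/d_i}| = p_i^{1/d_i}$ for any choice of root, this means $p_i^{1/d_i} \to \infty$ in the sense of Theorem \ref{corEisenstein}. I would take $D_i = x^{d_i} - p_i$, which is a $p_i$-Eisenstein polynomial in $\IZ[x]$ (the constant term $-p_i$ lies in $(p_i)$ but not $(p_i)^2$, and the intermediate coefficients vanish), and let $\alpha_i$ be the chosen $d_i$-th root of $p_i$. The remaining hypothesis of Theorem \ref{corEisenstein} to check is $p_i \nmid \Delta_{\IQ(\alpha_j)}$ for $j < i$: since $p_1 < p_2 < \cdots$, the prime $p_i$ exceeds every prime dividing $\Delta_{\IQ(\alpha_j)}$ for $j<i$, because the only rational primes ramifying in $\IQ(\alpha_j) = \IQ(p_j^{1/d_j})$ are those dividing $d_j p_j$, all of which are $\le p_j < p_i$. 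Hence Theorem \ref{corEisenstein} applies and $K(p_1^{1/d_1}, p_2^{1/d_2}, \ldots)$ has property $(N)$.

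For the converse, suppose $|p_i^{1/d_i}| = p_i^{1/d_i}$ does \emph{not} tend to infinity. Then there is a constant $C$ and an infinite subsequence with $p_i^{1/d_i} \le C$. The plan is to exhibit infinitely many elements of $K(p_1^{1/d_1}, p_2^{1/d_2}, \ldots)$ of bounded height, contradicting property $(N)$. The natural candidates are the generators $\beta_i := p_i^{1/d_i}$ themselves along this subsequence. Each $\beta_i$ is an algebraic number whose conjugates over $\IQ$ are $\zeta p_i^{1/d_i}$ for $d_i$-th roots of unity $\zeta$, all of absolute value $p_i^{1/d_i} \le C$ at the archimedean places; at the finite places the only contribution to the height comes from the prime above $p_i$, and since $\beta_i^{d_i} = p_i$ one computes that the finite part of the height is exactly $p_i^{1/d_i}$ as well (the minimal polynomial $x^{d_i}-p_i$ has Mahler measure $p_i$, so $H(\beta_i) = p_i^{1/d_i}$). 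Thus $H(\beta_i) \le C$ for all $i$ in the subsequence. Since these $\beta_i$ are distinct real numbers (they are $d_i$-th roots of distinct primes), this yields infinitely many elements of bounded height, so the field fails property $(N)$.

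The main point requiring care is the height computation $H(p_i^{1/d_i}) = p_i^{1/d_i}$, but this is a standard fact: for $\alpha$ with minimal polynomial $x^n - a$ over $\IQ$ and $a$ a positive integer, $H(\alpha) = \left(\prod_v \max\{1, |\alpha|_v\}\right)$ equals $M(x^n-a)^{1/n} = a^{1/n}$ where $M$ denotes the Mahler measure; equivalently one checks the archimedean places contribute $a^{1/n}$ in total and the single ramified finite place contributes the same, balanced against the degree $n$. I expect no serious obstacle here — the only subtlety is making sure the chosen $d_i$-th root (possibly a complex one, as permitted by the statement) has the same height as the real positive root, which is clear since $H$ is Galois-invariant and all $d_i$-th roots of $p_i$ are conjugate over $\IQ$. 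One should also note that the field $K$ plays no role in the converse: enlarging the field only makes property $(N)$ harder to satisfy, so if it fails for $\IQ(p_1^{1/d_1}, \ldots)$ it fails for $K(p_1^{1/d_1}, \ldots)$; but in fact the argument above produces the bounded-height infinite set directly inside any field containing all the $\beta_i$.
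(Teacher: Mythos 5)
Your necessity direction is fine and is exactly the paper's (the height of any $d_i$-th root of $p_i$ is $p_i^{1/d_i}$, so a bounded subsequence of $p_i^{1/d_i}$ gives infinitely many elements of bounded height), and your overall strategy for sufficiency --- take $D_i=x^{d_i}-p_i$, $\alpha_i$ the chosen root, and invoke Theorem \ref{corEisenstein} --- is also the paper's. But your verification of the hypothesis $p_i\nmid \Delta_{\IQ(\alpha_j)}$ for $j<i$ contains a genuine gap: you assert that the primes ramifying in $\IQ(p_j^{1/d_j})$ divide $d_jp_j$ and are therefore ``all $\le p_j<p_i$''. The primes dividing $d_j$ need not be $\le p_j$. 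For instance take $p_1=2$, $d_1=3$, and let $3$ occur later in the sequence as some $p_i$; then $\Delta_{\IQ(2^{1/3})}=-108$ is divisible by $3=p_i$, so the hypothesis of Theorem \ref{corEisenstein} fails as you have set things up. Nothing in the corollary's assumptions prevents this, since the condition $p_i^{1/d_i}\to\infty$ constrains only the tail of the sequence $(d_i)$, not its initial terms.

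The missing idea --- and in fact the entire content of the paper's proof of this corollary --- is to use $p_i^{1/d_i}\to\infty$ to show the bad pairs are finite in number and then shift the index. Concretely: eventually $p_j>d_j$, so for all sufficiently large $i$ and all $j<i$ every prime dividing $d_jp_j$ is $<p_i$ (for the finitely many initial $j$ with possibly large $d_j$ one additionally chooses $i$ large enough that $p_i>\max\{d_1,\dots,d_{i_1}\}$). One then discards the finitely many exceptional indices by absorbing $\IQ(\alpha_1),\dots,\alpha_{i_0}$ into the base field (replace $K$ by $K(\alpha_1,\dots,\alpha_{i_0})$, which is still a number field) and applies Theorem \ref{corEisenstein} to the shifted sequence, using also that $\Delta_{\IQ(\alpha_j)}$ divides $d_j^{d_j}p_j^{d_j-1}$. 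Without this reduction your application of Theorem \ref{corEisenstein} is not justified; with it, your argument becomes the paper's proof.
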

If the $d_i$ are prime and not uniformly bounded then $\IQ(p_1^{1/d_1},p_2^{1/d_2},p_3^{1/d_3},...)$ contains elements of arbitrarily large prime degree and thus it cannot be generated over $\IQ$ by algebraic numbers of bounded degree. The conclusion remains true if we drop the primality condition on $d_i$. This can be deduced from Proposition 1 in \cite{BoZa} which implies for any 
subfield  $L\subseteq \IQ^{(d)}$ the local degrees $[L_v:\IQ_v]$ are bounded solely in terms of $d$. 
Now the local degrees of $L=\IQ(p_1^{1/d_1},p_2^{1/d_2},p_3^{1/d_3},...)$ are not uniformly bounded and so $L$ is not contained in $\IQ^{(d)}$ for any choice of $d$.
To the best of the author's knowledge Corollary \ref{cor2} provides the first such example of a field with property $(N)$.
Moreover, Corollary \ref{cor2} easily implies the following statement.
\begin{theorem}\label{thmcompfields}
Property $(N)$ is not generally preserved under taking the composite of two fields. More concretely:
let $p_i$ be the $i+1$-th prime number and 
set $d_i=[\sqrt{\log p_i}]$. Let
\begin{alignat*}1
L_1&=\IQ(p_1^{1/d_1},p_2^{1/d_2},p_3^{1/d_3},...),\\
L_2&=\IQ(p_1^{1/(d_1+1)},p_2^{1/(d_2+1)},p_3^{1/(d_3+1)},...).
\end{alignat*}
Then  $L_1$ and $L_2$ both have property $(N)$ but their composite field does not have property $(N)$.
\end{theorem}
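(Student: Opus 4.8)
The plan is to derive property $(N)$ for $L_1$ and for $L_2$ from Corollary~\ref{cor2}, and then to produce inside the composite $L_1L_2$ an infinite set of elements of bounded height.

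First I would check the criterion of Corollary~\ref{cor2} (with ground field $\IQ$) for each of the two fields. Since $d_i=[\sqrt{\log p_i}]\le\sqrt{\log p_i}$ we have $p_i^{1/d_i}=\exp(\log p_i/d_i)\ge\exp(\sqrt{\log p_i})\to\infty$, so $L_1$ has property $(N)$. Likewise $d_i+1\le\sqrt{\log p_i}+1$ gives $\log p_i/(d_i+1)\ge\log p_i/(\sqrt{\log p_i}+1)\to\infty$, hence $p_i^{1/(d_i+1)}\to\infty$ and $L_2$ has property $(N)$ as well. (Note that $d_i\ge1$ for every $i$, since $p_1=3$, so these fields are well defined.)

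Next I would exhibit the bad elements of $L_1L_2$. For each $i$ the real positive roots $p_i^{1/d_i}$ and $p_i^{1/(d_i+1)}$ both lie in $L_1L_2$, and since $\gcd(d_i,d_i+1)=1$ we may choose integers $a_i,b_i$ with $a_i(d_i+1)+b_id_i=1$ and put
\[
\gamma_i:=\bigl(p_i^{1/d_i}\bigr)^{a_i}\bigl(p_i^{1/(d_i+1)}\bigr)^{b_i}=p_i^{1/(d_i(d_i+1))}\in L_1L_2.
\]
The numbers $\gamma_i$ are pairwise distinct, for $\gamma_i=\gamma_j$ with $i\ne j$ would force $p_i^{d_j(d_j+1)}=p_j^{d_i(d_i+1)}$, contradicting unique factorisation in $\IZ$. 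Moreover $\gamma_i$ is a root of the $p_i$-Eisenstein polynomial $x^{d_i(d_i+1)}-p_i$, so it is an algebraic integer all of whose conjugates have absolute value $p_i^{1/(d_i(d_i+1))}$; hence $H(\gamma_i)=p_i^{1/(d_i(d_i+1))}=\exp\bigl(\log p_i/(d_i(d_i+1))\bigr)$. From $\sqrt{\log p_i}-1<d_i\le\sqrt{\log p_i}$ one gets $\log p_i-\sqrt{\log p_i}<d_i(d_i+1)\le\log p_i+\sqrt{\log p_i}$, so $\log p_i/(d_i(d_i+1))\to1$ and therefore $H(\gamma_i)\to e$. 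Thus $\{\gamma_i\}_{i\ge1}$ is an infinite subset of $L_1L_2$ of bounded height, and $L_1L_2$ fails property $(N)$; together with the previous paragraph this proves the theorem.

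There is no real difficulty once Corollary~\ref{cor2} is available; the only delicate point is the quantitative balancing in the last step. The exponent $d_i=[\sqrt{\log p_i}]$ is chosen precisely so that $d_i(d_i+1)$ is asymptotic to $\log p_i$: this keeps $H(\gamma_i)$ bounded while leaving $\log p_i/d_i$ and $\log p_i/(d_i+1)$ large enough for Corollary~\ref{cor2} to grant property $(N)$ to $L_1$ and $L_2$. A faster-growing choice of $d_i$ would destroy property $(N)$ for the two fields, while a slower-growing one would leave the heights of the $\gamma_i$ unbounded, so the construction sits exactly at the threshold.
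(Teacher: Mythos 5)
Your proof is correct and takes essentially the same approach as the paper: Corollary~\ref{cor2} gives property $(N)$ for $L_1$ and $L_2$, and the failure for the composite comes from the elements $p_i^{1/(d_i(d_i+1))}$, which the paper obtains simply as the quotients $p_i^{1/d_i}/p_i^{1/(d_i+1)}$ (rather than via Bezout) and whose bounded height it records by applying the ``only if'' direction of Corollary~\ref{cor2} to the subfield they generate, instead of your direct height computation.
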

Another example proving Theorem \ref{thmcompfields},  again coming from Corollary \ref{cor2}, is
as follows: consider the fields $L_1=\IQ(p_1^{1/d_1},p_2^{1/d_2},p_3^{1/d_3},...)$ and
$L_2=\IQ(\zeta_1p_1^{1/d_1},\zeta_2p_2^{1/d_2},\zeta_3p_3^{1/d_3},...)$, where $d_i$ is as in Theorem \ref{thmcompfields} and
$\zeta_i$ are primitive $d_i$-th roots of unity. Then plainly $L_1,L_2$ have the
property $(N)$ (by Corollary \ref{cor2}) but $L_1L_2$ does not because it contains infinitely many
roots of unity.\\

Let us give one more immediate consequence of Theorem \ref{corEisenstein}. This result can be considered as a
very small step towards the validity of property $(N)$ for $K^{(d)}$.
\begin{corollary}\label{corEisenstein3}
Let $d$ be a positive integer, let $F_0$ be an arbitrary number field
and let $F_1,F_2,F_3,...$ be a sequence of finite extensions of $F_0$ with $[F_i:F_0]\leq d$.
Moreover suppose there exists a sequence $p_1,p_2,p_3,...$ of positive prime numbers
such that $p_i$ ramifies totally in $F_i$ but does not
ramify in $F_j$ for $1\leq j<i$. Then the compositum of $F_0,F_1,F_2,F_3,...$ has the Northcott property.
\end{corollary}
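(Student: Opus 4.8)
The strategy is to deduce this directly from Theorem \ref{corEisenstein}. The obstacle is that the hypotheses of Corollary \ref{corEisenstein3} speak about \emph{total ramification} of $p_i$ in $F_i$, whereas Theorem \ref{corEisenstein} is phrased in terms of roots of $p_i$-Eisenstein polynomials. So the first task is a translation: I would use the standard fact that if a prime $p$ is totally ramified in a number field extension, then (after passing to the completion and applying Krasner's lemma, or simply invoking the theory of totally ramified extensions of local fields) $F_i$ is generated over $F_0$ — indeed over $\IQ$ if $p_i$ is already unramified in $F_0$ and $F_0/\IQ$ is chosen appropriately — by a root of an Eisenstein polynomial. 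However, there is a subtlety: Theorem \ref{corEisenstein} requires the Eisenstein polynomial to lie in $\IZ[x]$, and requires $p_i \nmid \Delta_{\IQ(\alpha_j)}$ for $j<i$, i.e. non-ramification of $p_i$ in the fields $\IQ(\alpha_j)$ generated by the \emph{single} roots $\alpha_j$, not in the $F_j$ themselves. Since $\IQ(\alpha_j)\subseteq F_j$ (possibly with $F_0$ involved), unramifiedness of $p_i$ in $F_j$ for $j<i$ gives what we need provided we first enlarge the picture so that $F_0$ is absorbed correctly.

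Concretely, here is how I would organize it. Let $n=[F_0:\IQ]$. By discarding finitely many of the $p_i$ (which does not affect property $(N)$, since removing finitely many generators from the list changes $K(\alpha_1,\alpha_2,\dots)$ only by a finite extension, and finite extensions preserve $(N)$ by Theorem \ref{ThDvZa}), we may assume each $p_i$ is unramified in $F_0$; in particular $F_0=\IQ(\theta)$ for some $\theta$ with $p_i\nmid\Delta_{\IQ(\theta)}$ for all $i$, and we may treat $F_0$ itself as the base field $K$ in Theorem \ref{corEisenstein}. Then, for each $i$, since $p_i$ is totally ramified in $F_i/F_0$ and unramified in $F_0/\IQ$, there is a prime $\wp_i$ of $\Oseen_{F_0}$ above $p_i$ and an element $\alpha_i\in F_i$ that is a root of a $\wp_i$-Eisenstein polynomial $D_i\in\Oseen_{F_0}[x]$ with $F_i=F_0(\alpha_i)$. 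To arrange $D_i\in\IZ[x]$ and control the discriminants of the $\IQ(\alpha_j)$, I would instead work with the minimal polynomial over $\IQ$: the primes ramifying in $\IQ(\alpha_i)$ are among those ramifying in $F_i$, hence the only new ramified prime contributed by $\alpha_i$ over $F_0$ is $p_i$, so $p_j\nmid\Delta_{\IQ(\alpha_i)}$ for $j\ne i$ by the compositum-ramification fact quoted in the excerpt (a prime ramifies in a compositum iff it ramifies in one of the factors). This is exactly the hypothesis $p_i\nmid\Delta_{\IQ(\alpha_j)}$ for $j<i$ required by Theorem \ref{corEisenstein}.

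Finally I must check the growth condition $p_i^{1/d_i}\to\infty$, where $d_i=\deg D_i$. Here $d_i=[F_i:F_0]\le d$ is \emph{bounded}, while the $p_i$ are distinct primes, hence $p_i\to\infty$, so $p_i^{1/d_i}\ge p_i^{1/d}\to\infty$. All hypotheses of Theorem \ref{corEisenstein} (with $K=F_0$) are therefore met, and it yields that $F_0(\alpha_1,\alpha_2,\alpha_3,\dots)$ has property $(N)$. Since $F_0(\alpha_1,\alpha_2,\dots)=F_0F_1F_2\cdots$, this is precisely the compositum of $F_0,F_1,F_2,\dots$, completing the proof. The one point needing care — and the place I would spend the most words — is the local-to-global step producing the Eisenstein generator $\alpha_i$ with minimal polynomial over $\IQ$ having the right divisibility properties; everything else is bookkeeping with the ramification facts already recalled in the paper.
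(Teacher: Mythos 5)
Your high-level plan (reduce to Theorem \ref{corEisenstein}, note that the degrees $d_i$ are bounded while the $p_i$ are pairwise distinct so $p_i^{1/d_i}\to\infty$, and get $p_i\nmid\Delta_{\IQ(\alpha_j)}$ from $\IQ(\alpha_j)\subseteq F_j$) is the paper's route, but the step you yourself flag as the crux --- producing for each $i$ a root $\alpha_i$ of a $p_i$-Eisenstein polynomial \emph{in} $\IZ[x]$ with $F_0(\alpha_i)=F_i$ --- is where your argument breaks. You read ``$p_i$ ramifies totally in $F_i$'' as ramification relative to $F_0$, discard finitely many indices so that $p_i$ is unramified in $F_0$, and then only obtain a $\wp_i$-Eisenstein polynomial over $\Oseen_{F_0}$; the proposed fix of ``working with the minimal polynomial over $\IQ$'' does not work. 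Indeed, any root $\beta$ of a $p_i$-Eisenstein polynomial in $\IZ[x]$ generates a field $\IQ(\beta)$ in which $p_i$ is totally ramified over $\IQ$; so your translation would require a subfield $E\subseteq F_i$, totally ramified at $p_i$ over $\IQ$, with $EF_0=F_i$, and if $p_i$ is unramified in $F_0\neq\IQ$ such a subfield need not exist (take $F_i$ quartic whose only quadratic subfield is $F_0$, with the prime of $F_0$ above $p_i$ ramified in $F_i/F_0$: then $E$ would have to be $F_0$ or $F_i$, and $p_i$ is totally ramified over $\IQ$ in neither). Hence Theorem \ref{corEisenstein}, which insists on Eisenstein polynomials with rational integer coefficients, cannot be invoked as a black box in the setting you set up; one would instead have to rerun its proof, or apply Theorem \ref{Th1} directly, with relative Eisenstein data over $\Oseen_{F_0}$.

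The paper avoids all of this by reading the hypothesis absolutely: $p_i$ is totally ramified in $F_i/\IQ$. Lemma \ref{Eisensteintotram} with base field $\IQ$ then gives at once $F_i=\IQ(\alpha_i)$ for a root $\alpha_i$ of a $p_i$-Eisenstein polynomial in $\IZ[x]$ of degree $d_i=[F_i:\IQ]\leq d[F_0:\IQ]$; the hypothesis that $p_i$ does not ramify in $F_j$ for $j<i$ is literally $p_i\nmid\Delta_{\IQ(\alpha_j)}$; the $p_i$ are pairwise distinct, so $p_i^{1/d_i}\to\infty$; and Theorem \ref{corEisenstein} with $K=F_0$ finishes, since the compositum is $F_0(\alpha_1,\alpha_2,\dots)$. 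Two further remarks: your preliminary step of dropping finitely many $F_i$ (justified via Theorem \ref{ThDvZa}) is harmless but unnecessary, and your claim that $p_j\nmid\Delta_{\IQ(\alpha_i)}$ for all $j\neq i$ is more than the hypotheses give --- nothing prevents $p_j$ from ramifying in $F_i$ when $j<i$ --- though fortunately only the direction $p_i\nmid\Delta_{\IQ(\alpha_j)}$ for $j<i$ is needed.
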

In the case $d=3$ one can apply the criterion from Theorem \ref{Th1} directly to prove a stronger result.
\begin{corollary}\label{corcorTh1}
Let $F_0$ be an arbitrary number field and let $F_1,F_2,F_3,...$ be a sequence of field extensions of $F_0$ with $[F_i:F_0]\leq 3$ such that for each positive integer $i$ there is a
prime $p_i$ with $p_i\mid \Delta_{F_i}$ and $p_i\nmid \Delta_{F_j}$ for $0\leq j<i$. 
Then the compositum of
$F_0,F_1,F_2,F_3,...$ has the Northcott property.
\end{corollary}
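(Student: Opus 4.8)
The plan is to deduce Corollary \ref{corcorTh1} from Theorem \ref{Th1}, following the same scheme used for the other corollaries but exploiting the special structure of degree-$\leq 3$ extensions. First I would dispose of the degenerate steps: if $F_i\subseteq F_0F_1\cdots F_{i-1}$ we simply discard it, since the compositum is unchanged; so after relabelling we may assume each new field genuinely enlarges the composite. Set $K_0=F_0$ and $K_i=F_0F_1\cdots F_i$. The crucial observation is that $[F_i:F_0]\leq 3$ and $F_i\not\subseteq K_{i-1}$ force $[K_i:K_{i-1}]$ to be small and, more importantly, force the tower $K_{i-1}\subsetneq K_i$ to be \emph{saturated}: if $[F_i:F_0]\in\{2,3\}$ is prime then $F_i$ has no proper intermediate fields over $F_0$, hence $K_i=K_{i-1}F_i$ has no intermediate field strictly between $K_{i-1}$ and $K_i$ either (any such would come from a proper subextension of $F_i/F_0$, using that $[K_i:K_{i-1}]$ divides $[F_i:F_0]$). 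So it suffices to check the simplified condition (\ref{reldiscond}), namely $N_{K_{i-1}/\IQ}(D_{K_i/K_{i-1}})^{1/([K_i:K_0][K_i:K_{i-1}])}\to\infty$.

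Next I would produce the ramification input. By hypothesis $p_i\mid\Delta_{F_i}$, so $p_i$ ramifies in $F_i$, hence in $K_i$; and since $p_i\nmid\Delta_{F_j}$ for $0\leq j<i$, the prime $p_i$ is unramified in each $F_j$ with $j<i$, hence (by the compositum-of-fields fact on ramification quoted in the introduction, applied repeatedly) unramified in $K_{i-1}$. Therefore $p_i$ ramifies in $K_i/K_{i-1}$: some prime $\wp$ of $\Oseen_{K_{i-1}}$ above $p_i$ divides $D_{K_i/K_{i-1}}$. Taking norms down to $\IQ$, $N_{K_{i-1}/\IQ}(\wp)$ is a power of $p_i$ and divides $N_{K_{i-1}/\IQ}(D_{K_i/K_{i-1}})$, so this norm is at least $p_i$. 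The remaining task is to control the exponent $[K_i:K_0][K_i:K_{i-1}]$ in (\ref{reldiscond}) by something bounded; and here the degree-$3$ restriction is exactly what is needed, since $[K_i:K_{i-1}]\leq[F_i:F_0]\leq 3$, so the exponent is at most $3[K_i:K_0]$, and hence $N_{K_{i-1}/\IQ}(D_{K_i/K_{i-1}})^{1/([K_i:K_0][K_i:K_{i-1}])}\geq p_i^{1/(3[K_i:K_0])}$.

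The main obstacle is precisely that last point: $[K_i:K_0]$ grows (roughly geometrically) with $i$, so the crude bound $p_i^{1/(3[K_i:K_0])}$ need not tend to infinity, and we get no constraint linking $p_i$ to $i$ in the hypothesis. The way out, which is why degree $3$ (rather than degree $d$) is the honest statement, is to be more careful about \emph{which} primes divide the relative discriminant and to separate the contribution of $p_i$ from the rest. Concretely, I would show that in condition (\ref{reldiscond}) one may replace $N_{K_{i-1}/\IQ}(D_{K_i/K_{i-1}})$ by just the $p_i$-part and correspondingly shrink the exponent: for a prime $\wp\mid p_i$ in $\Oseen_{K_{i-1}}$ that ramifies in $K_i$, the local extension $K_{i,\mathfrak{P}}/K_{i-1,\wp}$ has degree $\leq 3$ and is ramified, and because $[K_i:K_{i-1}]\leq 3$ tame ramification (or the small wild case $p_i=3$, handled by a bounded extra factor) gives that the exponent of $\wp$ in $D_{K_i/K_{i-1}}$ is a definite positive quantity, while the number of primes $\wp$ above $p_i$ times their residue degrees is exactly $[K_{i-1}:\IQ]$-related through $\sum e_\wp f_\wp=[K_{i-1}:\IQ]$ over $p_i$ — so $N_{K_{i-1}/\IQ}(D_{K_i/K_{i-1}})\geq p_i^{c[K_{i-1}:\IQ]/3}$ for an absolute $c>0$. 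Plugging this in and using $[K_i:K_0]=[K_{i-1}:K_0][K_i:K_{i-1}]\leq 3[K_{i-1}:\IQ]/[K_0:\IQ]$, the exponent cancellation yields a lower bound of the shape $p_i^{c'}$ with $c'>0$ absolute, which tends to infinity since $p_i\to\infty$ (the $p_i$ are distinct primes, being the ramified primes peeled off at distinct steps). Once this inequality is in place, Theorem \ref{Th1} applies to the (possibly reindexed, saturated) tower $K_0\subsetneq K_1\subsetneq\cdots$ and gives property $(N)$ for $L=\bigcup_i K_i$, which is the compositum of all the $F_j$.
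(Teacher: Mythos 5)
Your skeleton (the compositum tower $K_i=F_0\cdots F_i$, saturation because $[K_i:K_{i-1}]\leq 3$, reduction to condition (\ref{reldiscond}), and the realization that the naive bound $p_i^{1/(3[K_i:K_0])}$ is useless) is exactly the paper's, and you correctly isolate the crux: one must show the $p_i$-part of $N_{K_{i-1}/\IQ}(D_{K_i/K_{i-1}})$ grows like a fixed positive power of $p_i^{[K_{i-1}:\IQ]}$. But your justification of that inequality has a genuine gap. You argue locally: each ramified prime $\wp$ of $\Oseen_{K_{i-1}}$ above $p_i$ contributes a bounded-below exponent, and then you invoke $\sum_{\wp\mid p_i}e_\wp f_\wp=[K_{i-1}:\IQ]$ to conclude $N_{K_{i-1}/\IQ}(D_{K_i/K_{i-1}})\geq p_i^{c[K_{i-1}:\IQ]/3}$. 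This tacitly assumes that every prime of $K_{i-1}$ above $p_i$ (or at least a fixed proportion of them, weighted by residue degree) ramifies in $K_i$, and that is false in general: the hypothesis only concerns the rational prime $p_i$. For instance take $F_0=\IQ(\sqrt2)$, $p=7=\pi\overline{\pi}$ with $\pi=3+\sqrt2$, and $F_1=F_0(\sqrt{\pi})$; then $7\mid\Delta_{F_1}$ and $7\nmid\Delta_{F_0}$, but only one of the two primes of $K_0=F_0$ above $7$ ramifies in $K_1=F_1$, so the $7$-part of $N_{K_0/\IQ}(D_{K_1/K_0})$ is $7$, not $7^{[K_0:\IQ]}$. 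At the first step this is harmless, but nothing in your argument excludes the same phenomenon at a late step $i$, where a bounded power of $p_i$ in the relative discriminant would only give $p_i^{O(1)/([K_i:K_0][K_i:K_{i-1}])}$, which need not tend to infinity because the hypothesis imposes no growth of $p_i$ in terms of $i$ beyond the $p_i$ being pairwise distinct. So the decisive inequality is asserted rather than proved.

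The inequality you want is in fact true, but the hypothesis has to be fed in through the field $F_i$, not only through the prime $p_i$; this is the one idea missing from your write-up and it is the heart of the paper's proof. Apply the tower formula (\ref{dis}) to $F_i\subseteq K_i$ to get $\Delta_{F_i}^{[K_i:F_i]}\mid\Delta_{K_i}$, hence $p_i^{[K_i:F_i]}\mid\Delta_{K_i}$; since $p_i\nmid\Delta_{K_{i-1}}$ (Hilbert's compositum criterion), applying (\ref{dis}) to $K_{i-1}\subseteq K_i$ forces the entire power $p_i^{[K_i:F_i]}$ to divide $N_{K_{i-1}/\IQ}(D_{K_i/K_{i-1}})$. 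Since $[K_i:F_i]=[K_i:K_0]/[F_i:K_0]$ with $[F_i:K_0]\leq3$ and $[K_i:K_{i-1}]\leq3$, the quantity in (\ref{reldiscond}) is at least $p_i^{1/9}\to\infty$, and Theorem \ref{Th1} applies. (A local variant is possible — fix one ramified prime of $F_i$ over $p_i$ and note that every prime of $K_i$ above it is ramified over $K_{i-1}$, then count residue degrees — but that is just a roundabout form of the same divisibility.) Two minor slips elsewhere: no ``degenerate steps'' need discarding, since $p_i$ ramifies in $K_i$ but not in $K_{i-1}$, so the tower is automatically strict; and saturation follows simply from $[K_i:K_{i-1}]\leq3$, not from a divisibility $[K_i:K_{i-1}]\mid[F_i:F_0]$, which need not hold for non-normal cubics.
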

As a next step towards $K^{(3)}$ we would like to replace $F_i$ in Corollary \ref{corcorTh1} by its Galois closure $F_i^{(g)}$ over $F_0$.
Unfortunately we have to impose an additional technical condition and we also restrict $F_0$ to $\IQ$.
\begin{corollary}\label{cor2corTh1}
Let $F_1,F_2,F_3,...$ be a sequence of field extensions with $[F_i:\IQ]\leq 3$ such that for each integer $i>1$ there is a
prime $p_i$ with $p_i\mid \Delta_{F_i}$ and $p_i\nmid \Delta_{F_j}$ for $1\leq j<i$. 
Furthermore suppose that for each $i>1$ at least one of the following conditions
holds:\\
(a)$F_i/\IQ$ is Galois.\\
(b)$F_i=\IQ(\alpha)$ for an $\alpha$ with $\alpha^3 \in \IQ$.\\
(c)$F_i=\IQ(\alpha)$ for an algebraic integer $\alpha$ with $2\nmid \ord_{p_i}Disc(D_{\alpha})$ for the monic minimal polynomial $D_{\alpha}\in \IZ[x]$ of $\alpha$.\\
(d)$F_i=\IQ(\alpha)$ for a root $\alpha$ of a polynomial of the form $x^3+a_0c^3x^l+b_0^lc^3$ with $l\in \{1,2\}$ and rational integers $a_0,b_0,c$ satisfying $\gcd(2a_0c,3b_0)=1$.\\
Then the compositum of $F_1^{(g)},F_2^{(g)},F_3^{(g)},...$ has the Northcott property.
\end{corollary}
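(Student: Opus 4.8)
The plan is to deduce this from Theorem \ref{Th1}, applied to the tower $K_0=\IQ\subseteq K_1\subseteq K_2\subseteq\cdots$ with $K_i=F_1^{(g)}F_2^{(g)}\cdots F_i^{(g)}$ (delete any $F_j$, necessarily $j=1$, equal to $\IQ$). Each $F_j^{(g)}/\IQ$ is Galois, hence so is every $K_i/\IQ$; and $F_j^{(g)}=F_j\cdot\IQ(\sqrt{\Delta_{F_j}})$ is ramified only at primes dividing $\Delta_{F_j}$, so the hypothesis $p_i\nmid\Delta_{F_j}$ for $j<i$ makes $p_i$ unramified in $K_{i-1}$, while $p_i$ ramifies in $F_i\subseteq K_i$; in particular $K_{i-1}\subsetneq K_i$ and $\bigcup_iK_i$ is the asserted compositum. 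The $p_i$ are pairwise distinct (if $j<i$ then $p_j\mid\Delta_{F_j}$ but $p_i\nmid\Delta_{F_j}$), so $p_i\to\infty$, and $[M:K_{i-1}]\le[F_i^{(g)}:\IQ]\le6$. By (\ref{dis}) applied to $\IQ\subseteq K_{i-1}\subseteq M$, together with $p_i\nmid\Delta_{K_{i-1}}$, one gets $\ord_{p_i}(N_{K_{i-1}/\IQ}(D_{M/K_{i-1}}))=\ord_{p_i}(\Delta_M)$ for every intermediate field $M$; hence it suffices to show that for all but finitely many $i$ and all $M$ with $K_{i-1}\subsetneq M\subseteq K_i$ one has $\ord_{p_i}(\Delta_M)\ge\frac13[M:\IQ]$, because then the quantity in (\ref{reldiscond0}) is $\ge p_i^{1/18}\to\infty$ and Theorem \ref{Th1} applies.

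To prove this displayed inequality I would argue group-theoretically. Write $\Gamma_i=\mathrm{Gal}(K_i/\IQ)$ and $G_i=\mathrm{Gal}(K_i/K_{i-1})\trianglelefteq\Gamma_i$; restriction identifies $G_i$ with a subgroup of $\mathrm{Gal}(F_i^{(g)}/\IQ)$, hence of $S_3$ (as $[F_i:\IQ]\le3$). Fixing a prime $\mathcal P$ of $K_i$ over $p_i$, with decomposition group $\tilde D$ and inertia group $\tilde I\subseteq\tilde D$, one checks that $\tilde I\subseteq G_i$ (because $p_i$ is unramified in the Galois field $K_{i-1}=K_i^{G_i}$) and that $|\tilde I|=e(p_i\mid F_i^{(g)})$ (because adjoining the $p_i$-unramified field $K_{i-1}$ does not alter the ramification index of $p_i$); thus $\tilde I$ sits inside $G_i\le S_3$ as a copy of the inertia group of $p_i$ in $F_i^{(g)}/\IQ$. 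The crucial claim is that, apart from finitely many $i$, this inertia group is not cyclic of order $3$, equivalently that $p_i$ ramifies in the quadratic resolvent $\IQ(\sqrt{\Delta_{F_i}})$ of $F_i$. This is exactly what conditions (a)--(d) secure: under (a), $F_i^{(g)}=F_i$ and $G_i$ is cyclic of prime order (so $K_i$ is the only intermediate field); under (c), oddness of $\ord_{p_i}(\mathrm{Disc}(D_\alpha))$ forces oddness of $\ord_{p_i}(\Delta_{F_i})$, so $p_i$ divides the squarefree part of $\Delta_{F_i}$; under (d) the explicit discriminant of the trinomial together with $\gcd(2a_0c,3b_0)=1$ gives the same via the $p_i$-adic Newton polygon; and under (b) the resolvent equals the fixed field $\IQ(\sqrt{-3})=\IQ(\zeta_3)$, which is ramified at $p_i$ when $p_i=3$ and which is contained in $K_{i-1}$ as soon as one pure-cubic $F_j^{(g)}$ with $j<i$ has entered the tower, so (b) costs at most one exceptional index.

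Granting this, the inequality is a short case analysis. If $|G_i|$ is prime, then $M=K_i$, $p_i$ is totally ramified in $K_i/K_{i-1}$, and $\ord_{p_i}(\Delta_{K_i})\ge[K_i:\IQ](1-1/|G_i|)\ge\frac12[K_i:\IQ]$. If $G_i\cong S_3$, then $\tilde I$ is $G_i$ itself or has order $2$ (the order-$3$ case being excluded), and for $M=K_i^H$ with $H\subsetneq G_i$ one enumerates the primes of $M$ over $p_i$ through the double cosets $H\backslash\Gamma_i/\tilde D$: those that are unramified have total local degree at most $\frac13[M:\IQ]$ — this is where $\tilde I\not\le A_3$ is used, since it keeps $p_i$ ramified even in $M=K_{i-1}\IQ(\sqrt{\Delta_{F_i}})$ — while the different exponent of each ramified prime is at least $e-1\ge\frac12e$, and together these give $\ord_{p_i}(\Delta_M)\ge\frac12([M:\IQ]-\frac13[M:\IQ])=\frac13[M:\IQ]$. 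Finitely many exceptional $i$ are harmless, since (\ref{reldiscond0}) concerns only the limit $i\to\infty$, so the proof is complete.

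The main obstacle is the second step: reading off the inertia type of $p_i$ in $F_i^{(g)}/\IQ$ from the hypotheses. This is precisely why the list (a)--(d) looks ad hoc — each alternative is tailored to guarantee (outside a bounded set of indices, in case (b)) that $p_i$ ramifies in the quadratic resolvent of $F_i$, so that the transverse quadratic subfield $K_{i-1}\IQ(\sqrt{\Delta_{F_i}})$ of the step $K_{i-1}\subseteq K_i$ cannot escape ramification at $p_i$; checking this for case (d), via the discriminant of the given trinomial and the coprimality condition, is the most computational point.
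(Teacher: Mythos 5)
Your overall architecture is the same as the paper's: reduce to Theorem \ref{Th1} via a $p_i^{1/18}$ lower bound on the quantity in (\ref{reldiscond0}), using (\ref{dis}) and $p_i\nmid\Delta_{K_{i-1}}$ to pass to $\ord_{p_i}(\Delta_M)$, and then showing that $p_i$ stays ramified in every intermediate field of the step $K_{i-1}\subsetneq M\subseteq K_i$ — with the $S_3$ case hinging on $p_i$ ramifying in the quadratic resolvent $E_i=\IQ(\sqrt{\Delta_{F_i}})$. Your inertia/double-coset count giving $\ord_{p_i}(\Delta_M)\geq\tfrac13[M:\IQ]$ is a correct quantitative version of the paper's ``same argument as in Corollary \ref{corcorTh1}'' step, and your handling of (a), (c), and (b) is sound; in (b) the paper instead adjoins $\IQ(\zeta_3)$ (and the first few $F_j^{(g)}$) to the base field $K_0$, while you observe that $\zeta_3$ enters the tower after the first non-Galois pure cubic, costing at most one index — both devices are legitimate since (\ref{reldiscond0}) only concerns the limit.

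The genuine gap is case (d). The paper's proof of the crucial claim there is not a discriminant computation at all: it cites Osada's Corollary 1, which says that under the stated shape of the trinomial and $\gcd(2a_0c,3b_0)=1$ the extension $F_i^{(g)}/E_i$ is unramified at \emph{all} finite primes, so any $p_i$ ramifying in $F_i^{(g)}$ must already ramify in $E_i$. Your proposed substitute — reading off oddness of $\ord_{p_i}(\mathrm{Disc})$ from the $p_i$-adic Newton polygon — does not work as described: for $l=1$ the discriminant is $-c^6(4a_0^3c^3+27b_0^2)$, and the typical ramified prime satisfies $p_i\nmid a_0b_0c$ and divides the second factor, in which case all coefficients of the trinomial are $p_i$-adic units and the Newton polygon is a single horizontal segment carrying no information. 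What actually has to be excluded is that $p_i>3$ is \emph{totally} ramified in $F_i$ (which would make $\ord_{p_i}(\Delta_{F_i})=2$, even, and leave $E_i$ unramified at $p_i$); this can be done either via Osada as in the paper, or by an elementary argument you did not supply, e.g.: total ramification forces $x^3+a_0c^3x^l+b_0^lc^3\equiv(x-r)^3\bmod p_i$, whence $r\equiv 0$ (as $p_i\neq 3$) and $p_i\mid a_0c^3$, $p_i\mid b_0^lc^3$, contradicting $\gcd(2a_0c,3b_0)=1$ unless $p_i\mid c$; and if $p_i\mid c$ the substitution $x=cy$ shows $F_i$ is generated by a root of $y^3+a_0c^ly^l+b_0^l$, whose discriminant is $\equiv -27b_0^{2l}\not\equiv 0\bmod p_i$, so $p_i\nmid\Delta_{F_i}$, contradicting the hypothesis. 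Without some such input, the step ``(d) implies $p_i$ ramifies in $E_i$'' is asserted rather than proved, and it is exactly the point where the paper leans on a nontrivial external theorem.
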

Theorem \ref{corEisenstein}, Corollary \ref{corEisenstein3}, Corollary \ref{corcorTh1}, and Corollary \ref{cor2corTh1}  can be generalized in various ways, for instance the constraints in these results can be relaxed by computing the contribution to the relative discriminant of more than just one prime.

\section{A simple observation}
\label{sec:1}

Let $L$ be a field of algebraic numbers of infinite degree. Now we consider a nested sequence of fields
\begin{alignat*}1
K_0\subsetneq K_1\subsetneq K_2\subsetneq K_3\subsetneq...
\end{alignat*}
such that 
\begin{alignat*}3
&(i) &&\text{$K_0$ has the property $(N)$, }\\
&(ii) &&[K_i:K_{i-1}]<\infty \text{ for $i>0$, }\\
&(iii) &&L=\bigcup_{i=0}^{\infty}K_i.
\end{alignat*}
For a finite extension $M/F$ of subfields of $\Qbar$ we define
\begin{alignat*}1
\delta(M/F) = \inf\{H(\alpha);F(\alpha)=M\}.
\end{alignat*}
Note that if $M$ has the property $(N)$ then the infimum is attained, i.e. there exists
$\alpha\in M$ with $F(\alpha)=M$ and $H(\alpha)=\delta(M/F)$.\\ 
Since each $K_i$ is a finite extension
of $K_0$ we deduce by $(i)$ and Theorem \ref{ThDvZa} that each field $K_i$ has property $(N)$.
\begin{proposition}\label{propobserv}
$L$ has property $(N)$ if and only if 
\begin{alignat*}1
\inf_{K_{i-1}\subsetneq M \subseteq K_i}\delta(M/K_{i-1})\longrightarrow \infty \qquad \text{  as } i \rightarrow \infty
\end{alignat*}
where the infimum is taken over all intermediate fields $M$ strictly larger than $K_{i-1}$.
\end{proposition}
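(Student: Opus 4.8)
The plan is to prove both directions of the equivalence, using the fact (established just before the statement) that every $K_i$ has property $(N)$, so that each infimum $\delta(M/K_{i-1})$ is attained by some generator $\alpha_M\in M$ with $K_{i-1}(\alpha_M)=M$ and $H(\alpha_M)=\delta(M/K_{i-1})$.

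First I would prove the ``only if'' direction by contraposition. Suppose the infima do \emph{not} tend to infinity: there is a bound $X$ and an infinite set of indices $i$ for which one can choose an intermediate field $M_i$ with $K_{i-1}\subsetneq M_i\subseteq K_i$ and a generator $\alpha_{M_i}$ with $H(\alpha_{M_i})\le X$. Each $\alpha_{M_i}$ lies in $K_i\subseteq L$, so they all lie in $L$ and all have height $\le X$. It remains to check there are infinitely many distinct ones; if only finitely many values occurred among the $\alpha_{M_i}$, then some single $\alpha$ would generate $M_i$ over $K_{i-1}$ for infinitely many $i$, but $\alpha$ lies in some fixed $K_N$, forcing $M_i\subseteq K_N$ for arbitrarily large $i$, contradicting $K_{i-1}\subsetneq M_i$ together with the strict nesting $K_{N-1}\subsetneq K_N\subsetneq\cdots$. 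Hence $L$ contains infinitely many elements of bounded height and fails property $(N)$.

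Next I would prove the ``if'' direction, again by contraposition: assume $L$ fails property $(N)$, so there is $X$ with infinitely many $\alpha\in L$ satisfying $H(\alpha)\le X$; call this infinite set $S$. For each $\alpha\in S$ pick the least index $i=i(\alpha)$ with $\alpha\in K_{i(\alpha)}$; then $\alpha\in K_{i(\alpha)}\setminus K_{i(\alpha)-1}$, so $M:=K_{i(\alpha)-1}(\alpha)$ is an intermediate field with $K_{i(\alpha)-1}\subsetneq M\subseteq K_{i(\alpha)}$, whence $\delta(M/K_{i(\alpha)-1})\le H(\alpha)\le X$. Thus for every $\alpha\in S$ the index $i(\alpha)$ witnesses that the infimum at level $i(\alpha)$ is $\le X$. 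To finish I must show $i(\alpha)$ takes infinitely many values as $\alpha$ ranges over $S$: if it were bounded by some $N$, then $S\subseteq K_N$, and since $K_N$ has property $(N)$ (by Theorem \ref{ThDvZa}) it can contain only finitely many elements of height $\le X$, contradicting $|S|=\infty$. Hence the infima $\inf_{K_{i-1}\subsetneq M\subseteq K_i}\delta(M/K_{i-1})$ are $\le X$ for infinitely many $i$ and do not tend to infinity.

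The only mild subtlety — the ``main obstacle,'' such as it is — is the bookkeeping in each direction that upgrades ``bounded height for infinitely many indices/elements'' to ``infinitely many distinct bounded-height elements in $L$'' (resp. ``infinitely many levels with small infimum''); both steps hinge on the same observation that any fixed element of $L$ lies in some $K_N$ and that the nesting is strict, so nothing can stabilize across infinitely many levels. Everything else is immediate from the definitions of $\delta$ and of property $(N)$, plus the already-noted fact that each $K_i$ inherits property $(N)$ from $K_0$ via Theorem \ref{ThDvZa}.
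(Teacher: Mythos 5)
Your proof is correct and takes essentially the same route as the paper's: both directions by contraposition, using Theorem \ref{ThDvZa} to give each $K_i$ property $(N)$, producing infinitely many bounded-height elements of $L$ in one direction and, in the other, noting that the levels $i(\alpha)$ of a bounded-height infinite family must tend to infinity, so the infima have a bounded subsequence. One small remark on your ``only if'' direction: distinctness of the $\alpha_{M_i}$ is immediate, since each lies in $K_i\setminus K_{i-1}$ and these sets are pairwise disjoint; your intermediate claim that $\alpha\in K_N$ forces $M_i\subseteq K_N$ is not quite accurate (the correct point is that for $i-1\geq N$ one would get $K_{i-1}(\alpha)=K_{i-1}$, contradicting $K_{i-1}\subsetneq M_i$), though the contradiction you reach is genuine.
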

Although it is not needed here, we point out that for $i>0$
\begin{alignat*}1
\inf_{K_{i-1}\subsetneq M \subseteq K_i}\delta(M/K_{i-1})=\inf_{\alpha\in K_i\setminus K_{i-1}}H(\alpha)
\end{alignat*}
and this holds even if $K_i$ does not have property $(N)$.
The inequality ``$\leq$'' is obvious. For ``$\geq$'' let $M$ be a field with $K_{i-1}\subsetneq M \subseteq K_i$ and let $\alpha_1, \alpha_2, \alpha_3,...$ be a sequence in $M$ with 
$K_{i-1}(\alpha_j)=M$ and $H(\alpha_j)\rightarrow \delta(M/K_{i-1})$ as $j\rightarrow \infty$. Then clearly $\alpha_j\in K_i\backslash K_{i-1}$ and thus 
$H(\alpha_j)\geq \inf_{\alpha\in K_i\setminus K_{i-1}}H(\alpha)$. This shows that $\delta(M/K_{i-1})\geq \inf_{\alpha\in K_i\setminus K_{i-1}}H(\alpha)$
which proves the inequality ``$\geq$''.
\begin{proof}[of Proposition \ref{propobserv}]
For brevity let us write 
\begin{alignat*}1
A_i=\inf_{K_{i-1}\subsetneq M \subseteq K_i}\delta(M/K_{i-1}).
\end{alignat*}
First we show that property $(N)$ for the field $L$ implies $A_i \rightarrow \infty$.\\
For each $i>0$ we can find $\alpha_i\in K_i\backslash K_{i-1}$
with $H(\alpha_i)=A_i$, in particular the elements $\alpha_i$ are pairwise distinct.
Now suppose $(A_i)_{i=1}^{\infty}$ has a bounded subsequence.
Hence we get infinitely many elements $\alpha_i\in L$ with uniformly bounded height and so $L$ does not have property $(N)$.\\

Next we prove that $A_i \rightarrow \infty$ implies property $(N)$ for the field $L$.\\
Suppose $L$ does not have property (N). Hence there exists an infinite sequence
$\alpha_1,\alpha_2,\alpha_3,...$ of pairwise distinct elements in $L\backslash K_0$
with $H(\alpha_j)\leq X$ for a certain fixed real number $X$. Let $i=i(\alpha_j)$ be such that $\alpha_j \in K_i\backslash K_{i-1}$.  Thus
\begin{alignat*}1
K_{i-1}\subsetneq K_{i-1}(\alpha_j) \subseteq K_i
\end{alignat*}
and hence 
\begin{alignat*}1
A_i\leq \delta(K_{i-1}(\alpha_j)/K_{i-1})\leq H(\alpha_j)\leq X.
\end{alignat*}
Since each field $K_i$ has the property $(N)$ we conclude $i(\alpha_j)\longrightarrow \infty$ as $j \rightarrow \infty$. Thus $(A_i)_{i=1}^{\infty}$ has a bounded subsequence.
\end{proof}

\section{Silverman's inequality}
\label{sec:2}
In order to apply Proposition \ref{propobserv} we need a lower bound for the invariant $\delta(M/K)$.
A good lower bound was proven by Silverman if both fields are number fields.
So let $K,M$ be number fields with $K\subseteq M$ and $\m=[M:K]>1$.
Let $\alpha$ be a primitive point of the extension $M/K$, i.e. 
$M=K(\alpha)$. We apply Silverman's Theorem 2 from \cite{9} with $F=K$ and $K=M$ and
with Silverman's $S_F$ as the set of archimedean absolute values.
Then Silverman's $L_F(\cdot)$ is simply the usual norm
$N_{F/\IQ}(\cdot)$ and we deduce
\begin{alignat}1
\label{sil}
H(\alpha)^{[K:\IQ]}\geq \exp\left(-\frac{\delta_K\log \m}{2(\m-1)}\right)
N_{K/\IQ}(D_{M/K})^{\frac{1}{2\m(\m-1)}}
\end{alignat}
where $\delta_K$ is the number of archimedean places
of $K$. Since Silverman used an ``absolute height relative to $K$'' rather than an absolute height, we had 
to take the $[K:\IQ]$-th power on the left hand side of (\ref{sil}).

\section{Proof of Theorem \ref{Th1}}
\label{sec:3}
From Proposition \ref{propobserv} we know it suffices to show 
\begin{alignat*}1
\inf_{K_{i-1}\subsetneq M \subseteq K_i}\delta(M/K_{i-1})\longrightarrow \infty \qquad \text{  as } i \rightarrow \infty.
\end{alignat*}
So let $M$ be an intermediate field $K_{i-1}\subsetneq M \subseteq K_i$ and set $m=[M:K_{i-1}]$. We apply (\ref{sil}) with $K$ replaced by $K_{i-1}$.
Then taking the $[K_{i-1}:\IQ]$-th root and using $\delta_{K_{i-1}}\leq [K_{i-1}:\IQ]$ we conclude for any $\alpha \in M$ with $K_{i-1}(\alpha)=M$
\begin{alignat*}1
H(\alpha)\geq m^{-\frac{1}{2(m-1)}}(N_{K_{i-1}/\IQ}(D_{M/K_{i-1}}))
^{\frac{1}{2[K_{i-1}:\IQ]m(m-1)}}.
\end{alignat*}
In particular
\begin{alignat}1\label{delta}
\inf_{K_{i-1}\subsetneq M \subseteq K_i}\delta(M/K_{i-1})\geq (1/2)\inf_{K_{i-1}\subsetneq M \subseteq K_i}(N_{K_{i-1}/\IQ}(D_{M/K_{i-1}}))
^{\frac{1}{2[K_{i-1}:\IQ]m(m-1)}}.
\end{alignat}
Now using $[K_{i-1}:\IQ]m=[K_0:\IQ][M:K_0]$ and the hypothesis of the theorem 
we see that the right hand-side of (\ref{delta}) tends to
infinity as $i$ tends to infinity. This completes the proof of Theorem \ref{Th1}.

\section{Proof of Theorem \ref{corEisenstein}}
\label{sec:4}
Let us recall the following well-known lemma.
\begin{lemma}\label{Eisensteintotram}
Let $F,K$ be number fields with $F\subseteq K$. Let $\wp$ be a prime ideal in $\Oseen_F$.
The following are equivalent:\\
(i) $\wp$ ramifies totally in $K$.\\
(ii) $K=F(\alpha)$ for a root $\alpha$ of a $\wp$-Eisenstein polynomial in $\Oseen_F[x]$.
\end{lemma}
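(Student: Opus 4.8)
The plan is to prove both implications of Lemma \ref{Eisensteintotram} separately, using the standard local theory of ramification together with the polynomial criterion already invoked in the paper.

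\textbf{(ii) $\Rightarrow$ (i).} Suppose $K=F(\alpha)$ with $\alpha$ a root of a $\wp$-Eisenstein polynomial $D(x)=x^d+a_1x^{d-1}+\dots+a_d\in\Oseen_F[x]$, where $a_j\in\wp$ and $a_d\notin\wp^2$. The paper already notes $D$ is irreducible over $F$, so $[K:F]=d$. The cleanest route is to localize: pass to the completion $F_\wp$ and show $D$ remains irreducible there with $\alpha$ generating a totally ramified extension, so that $\wp$ is totally ramified in $K/F$. Concretely, I would work with the discrete valuation $v=v_\wp$ normalized so $v(\wp)=1$, extended to $F(\alpha)$ (or to $\overline{F_\wp}$). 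From the Eisenstein shape one reads off, via the Newton polygon of $D$ with respect to $v$, that the polygon is a single segment from $(0,1)$ to $(d,0)$ with slope $-1/d$; hence every root $\alpha$ has $v(\alpha)=1/d$. Therefore the value group of $F(\alpha)$ contains $\frac{1}{d}\IZ$, forcing the ramification index $e$ of any prime of $K$ above $\wp$ to be divisible by $d$; since $e\leq [K:F]=d$ we get $e=d$, i.e. $\wp$ ramifies totally in $K$. (Alternatively one argues that $\Oseen_F[\alpha]$ is locally at $\wp$ the full ring of integers, with $\alpha$ a uniformizer, which is the classical Eisenstein argument.)

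\textbf{(i) $\Rightarrow$ (ii).} Suppose $\wp$ ramifies totally in $K$, so there is a unique prime $\Q$ of $\Oseen_K$ above $\wp$ with $e(\Q/\wp)=[K:F]=d$ and residue degree $1$. I want to produce a $\wp$-Eisenstein polynomial in $\Oseen_F[x]$ with a root generating $K$ over $F$. The natural candidate is the minimal polynomial over $F$ of a well-chosen element $\alpha\in\Oseen_K$. First, since $f(\Q/\wp)=1$, one can choose $\alpha\in\Oseen_K$ that is a uniformizer at $\Q$, i.e. $v_\Q(\alpha)=1$, and — by a weak-approximation/CRT adjustment if $K/F$ has other primes — arrange $\alpha$ to lie in $\Q$ but outside the square of every other prime... actually more simply: localize at $\wp$ again, so $\Oseen_{K,\Q}$ is a complete DVR, totally ramified of degree $d$ over the complete DVR $\Oseen_{F,\wp}$, with uniformizer $\alpha$. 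Then $\Oseen_{K,\Q}=\Oseen_{F,\wp}[\alpha]$ and the minimal polynomial $D$ of $\alpha$ over $F_\wp$ has degree $d$; computing $v_\wp$ of its coefficients (the constant term is $\pm N_{K/F}(\alpha)$, which has $v_\wp$-valuation exactly $1$ by multiplicativity of the valuation and total ramification, and the intermediate coefficients are symmetric functions of the conjugates, all of positive valuation) shows $D$ is $\wp$-Eisenstein over $F_\wp$. Finally, approximate the coefficients of $D$ by elements of $\Oseen_F$ congruent to them modulo a high power of $\wp$; by Krasner's lemma the perturbed polynomial $\tilde D\in\Oseen_F[x]$ still has a root $\tilde\alpha$ with $F_\wp(\tilde\alpha)=F_\wp(\alpha)$, is still $\wp$-Eisenstein (the Eisenstein conditions are congruences mod $\wp$ and $\wp^2$, hence stable under small perturbation), and since $\deg\tilde D=d=[K:F]$ and $\tilde D$ is irreducible over $F$, the field $F(\tilde\alpha)$ has degree $d$ over $F$; one then checks $F(\tilde\alpha)=K$, e.g. because both are the unique degree-$d$ totally ramified extension inside a fixed algebraic closure matching $\alpha$ locally — or, to avoid uniqueness subtleties, by choosing $\alpha$ globally as a uniformizer at $\Q$ generating $K$ (possible since $K=F(\alpha)$ for the local uniformizer works globally after the approximation step).

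\textbf{Expected main obstacle.} The direction (i) $\Rightarrow$ (ii) is the delicate one: passing from the purely local statement (over the completion, where everything is classical) back to a \emph{global} Eisenstein polynomial in $\Oseen_F[x]$ whose root generates exactly $K$, not merely a field locally isomorphic to $K_\Q$. This is precisely where Krasner's lemma and a weak-approximation argument enter, and care is needed to ensure the perturbed polynomial still cuts out $K$ over $F$ (and not a different global field with the same completion at $\wp$). One clean way around this is to first reduce to the case $F=\IQ$ or invoke the well-known fact (e.g. from \cite{Marcus} or \cite{Neu}) that a prime is totally ramified in $K/F$ precisely when the local extension $K_\Q/F_\wp$ is totally ramified, and then use that complete discretely valued fields always admit Eisenstein uniformizers — the approximation lemma then only needs to preserve the Eisenstein congruences and the local degree, which it does. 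The (ii) $\Rightarrow$ (i) direction is routine Newton-polygon bookkeeping.
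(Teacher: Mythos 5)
Your direction (ii) $\Rightarrow$ (i) is fine: the Newton polygon (or the classical uniformizer argument) shows every root of a $\wp$-Eisenstein polynomial of degree $d$ has valuation $1/d$, forcing $d\mid e(\Q/\wp)\leq d$ for any prime $\Q$ of $K$ above $\wp$, hence total ramification. Note, for comparison, that the paper does not prove the lemma at all; it simply cites Theorem 24(a), p.~133 of Fr\"ohlich--Taylor, so any self-contained argument is already more than the paper offers.

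The gap is in your main route for (i) $\Rightarrow$ (ii). You compute the minimal polynomial of a uniformizer over the completion $F_\wp$ and then approximate its coefficients by elements of $\Oseen_F$, invoking Krasner's lemma. Krasner only gives $F_\wp(\tilde\alpha)=F_\wp(\alpha)=K_\Q$; the root $\tilde\alpha$ lives in $K_\Q$, not necessarily in $K$, and the global field $F(\tilde\alpha)$ need not equal $K$ -- two distinct degree-$d$ extensions of $F$ can have the same completion at one place. Your proposed justification that ``both are the unique degree-$d$ totally ramified extension'' is false even locally (e.g.\ $\IQ_p(p^{1/d})$ and $\IQ_p((up)^{1/d})$ for a suitable unit $u$ are distinct totally ramified extensions of degree $d$), and the parenthetical fallback (``works globally after the approximation step'') is circular, since the approximation step is exactly what destroys globality. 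The repair is standard and makes the whole approximation machinery unnecessary: because $\wp\Oseen_K=\Q^{d}$ with $d=[K:F]$, pick $\alpha\in\Q\setminus\Q^{2}$ \emph{globally} in $\Oseen_K$ and take its minimal polynomial $D$ over $F$ (not over $F_\wp$), which automatically lies in $\Oseen_F[x]$. Since $v_\Q(\alpha)=1$ and $e(\Q/\wp)=d$, the extension of $v_\wp$ to $F(\alpha)$ has ramification index at least $d$, so $[F(\alpha):F]\geq d$ and hence $F(\alpha)=K$ and $\deg D=d$; then $D$ coincides with the minimal polynomial of $\alpha$ over $F_\wp$, all of whose roots have valuation $1/d$, so the constant term has $v_\wp$-valuation exactly $1$ and the other non-leading coefficients have valuation at least $1$, i.e.\ $D$ is $\wp$-Eisenstein. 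With that substitution your proof closes; as written, the Krasner step leaves the identification $F(\tilde\alpha)=K$ unproved.
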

\begin{proof}
See for instance Theorem 24. (a) p.133 in \cite{FroTay}
\end{proof}
We can now prove Theorem \ref{corEisenstein}.
Let $K_0=K$ and for $i>0$ let $K_i=K_{i-1}(\alpha_i)$. 
By assumption we have $p_i\nmid \Delta_{\IQ(\alpha_j)}$ for $1\leq j<i$. 
Since $K_{i-1}$ is the compositum of $K_0,\IQ(\alpha_1),...,\IQ(\alpha_{i-1})$
we conclude that only primes dividing $\Delta_{K_0}\Delta_{\IQ(\alpha_1)}...\Delta_{\IQ(\alpha_{i-1})}$ 
can ramify in $K_{i-1}$. By assumption we know that $p_i\longrightarrow \infty$
which implies that there is an $i_0$ such that $p_i\nmid \Delta_{K_{i-1}}$ for all $i\geq i_0$.
Now we shift the index $i$ by $i_0$ steps so that the new $K_i,p_i,d_i$ are the old
$K_{i+i_0},p_{i+i_0},d_{i+i_0}$ and therefore $p_i$ is unramified in $K_{i-1}$ for all $i\geq 1$.
Now clearly $K_0\subsetneq K_1\subsetneq K_2\subsetneq....$ and of course $\bigcup_{i=0}^{\infty}K_i=K(\alpha_1,\alpha_2,\alpha_3,...)$. 
We will apply Theorem \ref{Th1} but first we have to make sure that
condition (\ref{reldiscond0}) holds.\\

Now let $i>0$ and let $M$ be an intermediate field with $K_{i-1}\subsetneq M\subseteq K_i$. Moreover set
$m=[M:K_{i-1}]$. Let $\wp$ be any prime ideal in $\Oseen_{K_{i-1}}$ above $p_i$.
Since $p_i$ is unramified in $K_{i-1}$ we conclude that 
$D_i$ is a $\wp$-Eisenstein polynomial in $\Oseen_{K_{i-1}}[x]$. 
According to the Eisenstein criterion 
this implies that $D_i$ is irreducible over $K_{i-1}$ and since $K_i=K_{i-1}(\alpha_i)$ we get $[K_i:K_{i-1}]=d_i$.
Moreover we conclude by Lemma \ref{Eisensteintotram} that $\wp$ ramifies
totally in $K_i/K_{i-1}$. Let 
\begin{alignat*}1
(p_i)=\wp_1...\wp_s
\end{alignat*}
be the decomposition into prime ideals in $\Oseen_{K_{i-1}}$.
Since $\wp_j$ ramifies totally in $K_i/K_{i-1}$ it also ramifies totally
in $M/K_{i-1}$. Hence 
\begin{alignat*}1
\wp_j=\B_j^{m} 
\end{alignat*}
for $1\leq j\leq s$ and prime ideals
$\B_j$ in $\Oseen_M$. Let $\Diff_{M/K_{i-1}}$ be the different of $M/K_{i-1}$ (for the definition see \cite{Neu} p.195).
Then we have 
$\B_j^{m-1}\mid \Diff_{M/K_{i-1}}$ (see \cite{Neu} (2.6) Theorem p.199)  and therefore
\begin{alignat*}1
(\B_1...\B_s)^{m-1}\mid \Diff_{M/K_{i-1}}.
\end{alignat*}
The discriminant $D_{M/K_{i-1}}$ is the norm of the different $\Diff_{M/K_{i-1}}$ from $M$ to $K_{i-1}$ (see \cite{Neu} 
(2.9) Theorem p.201). Taking then norms from $K_{i-1}$ to $\IQ$ we conclude
\begin{alignat*}1
N_{K_{i-1}/\IQ}(D_{M/K_{i-1}})=N_{K_{i-1}/\IQ}(N_{M/K_{i-1}}(\Diff_{M/K_{i-1}}))=N_{M/\IQ}(\Diff_{M/K_{i-1}}).
\end{alignat*}
Therefore
\begin{alignat}1\label{discdiv}
N_{M/\IQ}((\B_1...\B_s)^{m-1}) \mid N_{K_{i-1}/\IQ}(D_{M/K_{i-1}}).
\end{alignat}
On the other hand we have
\begin{alignat*}3
N_{M/\IQ}((\B_1...\B_s)^{m-1})&=(N_{M/\IQ}(\prod_{j=1}^{s}\B_j^m))^\frac{m-1}{m}&&=(N_{M/\IQ}(\prod_{j=1}^{s}\wp_j))^\frac{m-1}{m}\\
&=(N_{M/\IQ}(p_i))^\frac{m-1}{m}&&=p_i^{[K_{i-1}:\IQ](m-1)}.
\end{alignat*}
Combining the latter with (\ref{discdiv}) and not forgetting that $1<m=[M:K_{i-1}]\leq d_i$ we end up with 
\begin{alignat*}1
N_{K_{i-1}/\IQ}(D_{M/K_{i-1}})^{\frac{1}{[M:K_0][M:K_{i-1}]}}
\geq p_i^{\frac{[K_{i-1}:\IQ](m-1)}{[M:K_0]m}}
=p_i^{\frac{[K_{0}:\IQ](m-1)}{m^2}}\geq p_i^{\frac{1}{2m}}\geq p_i^{\frac{1}{2d_i}}.
\end{alignat*}
By hypothesis of the theorem $p_i^{\frac{1}{d_i}}$ tends to infinity.
Hence we can apply Theorem \ref{Th1} and this completes the proof of Theorem \ref{corEisenstein}.

\section{Proof of Corollary \ref{cor2}}
Since $H(p_i^{1/d_i})=|p_i^{1/d_i}|$ we see that condition $|p_i^{{1}/{d_i}}|\longrightarrow \infty$ is necessary to 
obtain property $(N)$.
Now let us prove that this condition implies property $(N)$.
The hypothesis implies that there is 
an $i_1$ such that $p_j>d_{j}$ for all $j>i_1$. Therefore we have
$p_i\geq p_j>d_{j}$ for all $i\geq j>i_1$. Clearly there exists an $i_2$ such that
$p_i>\max\{d_1,...,d_{i_1}\}$ for all $i\geq i_2$. Thus $p_i>\max\{d_1,...,d_{i_1},d_{i_1+1},...,d_{i}\}$ for all $i\geq i_0:=\max\{i_1,i_2\}$.
This implies
$p_i\nmid d_1p_1...d_{i-1}p_{i-1}$ for all $i\geq i_0$.
Set $D_i=x^{d_i}-p_i$, $\alpha_i=p_i^{1/d_i}$, $K_0=K$ and $K_i=K(\alpha_1,...,\alpha_i)$.
Since $\Delta_{\IQ(\alpha_{j})}$ divides $|Disc(D_j)|=d_j^{d_j}p_j^{d_j-1}$ 
we conclude $p_i\nmid \Delta_{\IQ(\alpha_{j})}$ for all $i\geq i_0$ and $1\leq j<i$. Now shift the index by $i_0$ steps,
more precisely: define $\widetilde{K}_{i}=K_{i_0+i}$, $\widetilde{p}_{i}=p_{i_0+i}$, $\widetilde{D}_{i}=D_{i_0+i}$, $\widetilde{d}_{i}=d_{i_0+i}$, $\widetilde{\alpha}_{i}=\alpha_{i_0+i}$.
Hence $\widetilde{p}_{i}\nmid \Delta_{\IQ(\widetilde{\alpha}_{j})}$ for all $i$ and $1\leq j<i$.
Clearly $K(\alpha_1,\alpha_2,\alpha_3,...)=\widetilde{K}_{0}(\widetilde{\alpha}_{1},\widetilde{\alpha}_{2},\widetilde{\alpha}_{3},...)$ and $|\widetilde{p}_{i}^{1/\widetilde{d}_{i}}|\longrightarrow \infty$.
Applying Theorem \ref{corEisenstein} with $K=\widetilde{K}_{0}$ and
$\widetilde{p}_{i},\widetilde{D}_{i},\widetilde{d}_{i},\widetilde{\alpha}_{i}$
completes the proof.

\section{Proof of Theorem \ref{thmcompfields}}
Note that $p_i^{1/d_i}$ and $p_i^{1/(d_i+1)}$ tend to infinity whereas $p_i^{1/(d_i^2+d_i)}$ is bounded
as $i$ tends to infinity.
Hence Corollary \ref{cor2} tells us that 
\begin{alignat*}1
L_1=\IQ(p_1^{1/d_1},p_2^{1/d_2},p_3^{1/d_3},...),\quad
L_2=\IQ(p_1^{1/(d_1+1)},p_2^{1/(d_2+1)},p_3^{1/(d_3+1)},...)
\end{alignat*}
both have property $(N)$. But $p_i^{1/d_i}/p_i^{1/(d_i+1)}=p_i^{1/(d_i^2+d_i)}$ and so the compositum of $L_1$ and $L_2$ contains
the field 
\begin{alignat*}1
\IQ(p_1^{1/(d_1^2+d_1)},p_2^{1/(d_2^2+d_2)},p_3^{1/(d_3^2+d_3)},...) 
\end{alignat*}
which according to Corollary \ref{cor2} does not have property $(N)$. Therefore the compositum of $L_1$ and $L_2$
does not have property $(N)$.

\section{Proof of Corollary \ref{corEisenstein3}}
For $i>0$ the extension $F_i/\IQ$ is generated by a root, say $\alpha_i$, of a $p_i$-Eisenstein polynomial $D_i$ in $\IZ[x]$
(see Lemma \ref{Eisensteintotram} Section \ref{sec:4}) of degree $d_i\leq d[F_0:\IQ]$. 
Therefore $F_i=\IQ(\alpha_i)$ and the compositum of $F_0,F_1,F_2,F_3,...$ is given by $F_0(\alpha_1,\alpha_2,\alpha_3,...)$.
From the hypothesis we know that $p_i\nmid \Delta_{\IQ(\alpha_j)}$
for $1\leq j<i$, in particular the primes $p_i$ are pairwise distinct and thus 
$p_i^{{1}/{d_i}}\longrightarrow \infty$. Applying Theorem \ref{corEisenstein} yields the desired result.

\section{Proof of Corollary \ref{corcorTh1}}
Write $K_i$ for the compositum of $F_0,...,F_i$. For $i>0$ we have $1\leq [K_i:K_{i-1}]\leq 3$, 
in particular $K_i/K_{i-1}$ does not admit a proper intermediate field and so (\ref{reldiscond0}) simplifies to (\ref{reldiscond}).
By assumption there is a prime $p_i$ which ramifies
in $F_i$ but not in $F_j$ for $0\leq j<i$. By virtue of (\ref{dis}) we conclude that 
\begin{alignat*}1
p_i^{[K_i:F_i]}\mid \Delta_{F_i}^{[K_i:F_i]}\mid \Delta_{K_i}.
\end{alignat*}
On the other hand 
$p_i$ does not ramify in $F_0,...,F_{i-1}$ and so does not ramify in the compositum $K_{i-1}$, that is $p_i\nmid \Delta_{K_{i-1}}$. Appealing to (\ref{dis}) again we conclude 
\begin{alignat*}1
p_i^{[K_i:F_i]}\mid N_{K_{i-1}/\IQ}(D_{K_i/K_{i-1}})
\end{alignat*}
and therefore
\begin{alignat}1\label{ineqcorcorTh1}
N_{K_{i-1}/\IQ}(D_{K_i/K_{i-1}})^{\frac{1}{[K_{i}:K_0][K_i:K_{i-1}]}}\geq p_i^{\frac{[K_i:F_i]}{[K_i:K_0][K_i:K_{i-1}]}}.
\end{alignat}
Since $[K_i:F_i]=[K_i:K_0]/[F_i:K_0]$ and $[F_i:K_0]\leq 3$ and $[K_i:K_{i-1}]\leq 3$ we see that the right hand-side of
(\ref{ineqcorcorTh1}) is at least
$p_i^{1/9}$. Now clearly 
$p_i\longrightarrow \infty$ as $i$ tends to infinity and so the statement follows from Theorem \ref{Th1}.

\section{Proof of Corollary \ref{cor2corTh1}}
Note that the primes $p_i$ are pairwise distinct. Hence there exists an integer $i_0>1$ such that $p_i>3$ for all $i\geq i_0$.
Write $\zeta_3=(-1+\sqrt{-3})/2$ and define $K_0$ as the compositum of $\IQ(\zeta_3),F_1^{(g)},...,F_{i_0}^{(g)}$ and for $i>0$ define
$K_i$ as the compositum of $K_0,F_1^{(g)},...,F_{i}^{(g)}$.
Now $\Delta_{\IQ(\zeta_3)}=-3$ and using our assumption we conclude 
that $p_i\nmid \Delta_{K_{i-1}}$ for all $i\geq i_0$.
We will show that for $i\geq i_0$ the prime $p_i$ ramifies in $M$ for each intermediate field $K_{i-1}\subsetneq M\subseteq K_i$.
By similar arguments as in the proof of Corollary \ref{corcorTh1} we derive
\begin{alignat*}1
N_{K_{i-1}/\IQ}(D_{M/K_{i-1}})^{\frac{1}{[M:K_0][M:K_{i-1}]}}\geq p_i^{\frac{1}{18}}.
\end{alignat*}
Applying Theorem \ref{Th1} proves the statement.
So let us now prove that $p_i$ ramifies in $M$.
If (a) holds we have $M=K_i$ and since 
$p_i\mid \Delta_{F_i}\mid \Delta_{K_i}$ we are done.
Next suppose (b) holds. Since $\zeta_3$ lies in $K_0$ we have $[K_i:K_{i-1}]\leq 3$
and so $M=K_i$ as before.
Now suppose (a) does not hold. 
Then $F_i^{(g)}/\IQ$  must have Galois group isomorphic to $S_3$.
The unique quadratic subfield, let us call it $E_i$, is then given by $\IQ(\sqrt{Disc(D)})$
where $D$ is the minimum polynomial of any $\alpha$ with $F_i=\IQ(\alpha)$.
Note that $[K_i:K_{i-1}]=[K_{i-1}F_i^{(g)}:K_{i-1}]=[K_{i-1}F_i^{(g)}:K_{i-1}E_i][K_{i-1}E_i:K_{i-1}]\leq [F_i^{(g)}:E_i][E_i:\IQ]=3\cdot 2$.
Hence if $K_i/K_{i-1}$ has Galois group isomorphic to $S_3$ then each strict intermediate field of $K_i/K_{i-1}$ is either the compositum of $K_{i-1}$ and a conjugate field of $F_i/\IQ$
or the compositum of $K_{i-1}$ and $E_i$.
Since $p_i$ ramifies in all conjugate fields of $F_i/\IQ$
it remains to show that $p_i$ ramifies in $E_i$.
Suppose (c) holds. Write $Q$ for the largest square dividing $Disc(D_{\alpha})$ and set $A=Disc(D_{\alpha})/Q$. Then 
$p_i\mid A$ and $A\mid \Delta_{E_i}$.
In particular $p_i$ ramifies in $E_i$. 
Now suppose (d) holds.
By Corollary 1 of \cite{Osada} we see that in this case
$F_i^{(g)}/E_i$ is unramified at all finite primes. Since $p_i$ ramifies in $F_i^{(g)}$ it must already ramify in $E_i$.
This shows that for $i\geq i_0$ the prime $p_i$ ramifies in $M$ for each intermediate field $K_{i-1}\subsetneq M\subseteq K_i$ and
thereby completes the proof.

\section{Some applications of the Northcott property}
\label{sec:0}
Applications to algebraic dynamics were a motivation for Northcott to study heights and related finiteness properties.
Let $S$ be a set and let $f:S\longrightarrow S$ be a self map of $S$. 
When we iterate this map we obtain an orbit $O_f(\alpha)$ for each point $\alpha\in S$ 
\begin{alignat*}1
O_f(\alpha)=\{\alpha,f(\alpha),f\circ f(\alpha),f\circ f\circ f(\alpha),...\}.
\end{alignat*}
We say a point $\alpha$ in $S$ is a preperiodic point under $f$ if $O_f(\alpha)$ is a finite set.
We are interested in the case where $S=\Qbar$ and $f$ is a polynomial map.
An important problem is to decide whether there are finitely many preperiodic points (under $f$) in a given subset $T$ of $S$.
A more specific version was proposed by Dvornicich and Zannier (\cite{DvZa} Question): \it let $T$ be a subfield of $\Qbar$ and let $f\in T[x]$ be a polynomial map with $\deg f\geq 2$. Can one decide whether the set of preperiodic points in $T$ (under $f$)
is finite or infinite? \rm If $T$ is the cyclotomic closure of
a number field Dvornicich and Zannier's Theorem 2 in \cite{DvornicichZannier2007} positively answers the question by explicitly describing all polynomials $f\in T[x]$ with infinitely many preperiodic points and $\deg f\geq 2$.
If $T$ has property $(N)$ the situation is much simpler and a well-known argument due to Northcott  
(see \cite{NorthcottPeriodic} Theorem 3) answers the question in the affirmative.
\begin{theorem}[Northcott]\label{lemmaNorthcott}
Suppose $T$ is a subset of $\Qbar$ with property $(N)$ and suppose $f\in \Qbar[x]$ with $\deg f\geq 2$. Then $T$ contains only finitely many preperiodic points under $f$.
\end{theorem}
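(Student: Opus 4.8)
The plan is to reduce everything to finiteness of a set of bounded height in a set with property $(N)$, exactly in the spirit of Northcott's classical argument. First I would recall the fundamental height estimate for a polynomial map: if $f\in\Qbar[x]$ has degree $D=\deg f\geq 2$, then there are constants $c_1,c_2>0$, depending only on $f$, such that $H(f(\alpha))\geq c_1 H(\alpha)^{D}$ for all but finitely many $\alpha\in\Qbar$ — or more cleanly, $h(f(\alpha))\geq D\,h(\alpha)-c$ for the logarithmic height $h=\log H$ and a constant $c=c(f)$. This is a standard functoriality property of the Weil height under morphisms of $\IP^1$; I would cite it rather than prove it.

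Next, fix $X>0$ and consider $\alpha\in T$ that is preperiodic under $f$. Since $T$ is $f$-invariant only if $f\in T[x]$, but here $f$ need not have coefficients in $T$; however preperiodicity is about the orbit $O_f(\alpha)$ inside $\Qbar$, and the key point is that $\alpha$ itself lies in $T$. So I would argue as follows: if $\alpha\in T$ is preperiodic, then the orbit $O_f(\alpha)$ is finite, hence every point of it has height bounded by some finite quantity $B(\alpha)$; applying the inequality $h(f^{n}(\alpha))\geq D^{n}h(\alpha)-c(1+D+\cdots+D^{n-1})$ and letting the orbit be eventually periodic, one forces $h(\alpha)\leq \frac{c}{D-1}$. (More directly: a preperiodic point satisfies $h(\alpha)\leq \frac{c}{D-1}$ because otherwise the heights along the orbit would grow without bound, contradicting finiteness of the orbit.) Thus the \emph{set of all preperiodic points of $f$ in $\Qbar$} has height bounded by the absolute constant $X_0:=c(f)/(D-1)$.

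Finally I would invoke property $(N)$ for $T$: the set $\{\alpha\in T : H(\alpha)\leq e^{X_0}\}$ is finite by definition of property $(N)$. Since every preperiodic point in $T$ lies in this set, there are only finitely many of them. This completes the proof.

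The main obstacle — really the only nontrivial input — is the height-growth inequality $h(f(\alpha))\geq D\,h(\alpha)-c(f)$ for a degree-$D\geq 2$ polynomial; this is where the hypothesis $\deg f\geq 2$ is used crucially (for $\deg f=1$ the height need not grow and there can be infinitely many preperiodic points, e.g. $f(x)=x$). Everything else is a short telescoping estimate plus the definition of property $(N)$. I would therefore present the growth inequality as a cited lemma (it appears already in Northcott's work, see \cite{NorthcottPeriodic}), derive the uniform height bound $c(f)/(D-1)$ on preperiodic points in two lines, and conclude by property $(N)$.
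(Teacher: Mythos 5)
Your argument is correct and is essentially the paper's own proof: both rest on the height-growth inequality for a degree-$\geq 2$ polynomial (the paper cites it in the multiplicative form $H(f(\alpha))\geq b_f H(\alpha)^{\deg f}$, you in the logarithmic form $h(f(\alpha))\geq D\,h(\alpha)-c$), iterate it to obtain a uniform height bound on all preperiodic points, and then invoke property $(N)$ for $T$. The only difference is cosmetic: the paper argues by contradiction from $H(\alpha)>1/b_f^2$, while you telescope to get the explicit bound $h(\alpha)\leq c/(D-1)$.
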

\begin{proof}
For each non-zero rational function $g\in \Qbar(x)$ there exists a positive constant $b_g<1$ such that
$H(g(\alpha))\geq b_gH(\alpha)^{\deg g}$ for all $\alpha \in \Qbar$ and $\alpha$ not a pole of $g$
(see \cite{DvZa} or \cite{Liardet} Proposition 1).
We apply this inequality with $g=f$.
Suppose $\alpha$ is preperiodic under the polynomial $f$ and $H(\alpha)>1/b_f^2>1$. Hence
$H(f(\alpha))\geq b_fH(\alpha)^{\deg f}>H(\alpha)^{\deg f-1/2}\geq H(\alpha)^{3/2}$.
Thus with $f^n$ the $n$-th iterate of $f$ we get
$H(f^n(\alpha))> H(\alpha)^{(3/2)^n}>1$ which is a contradiction for $n$ large enough.
Therefore $H(\alpha)\leq 1/b_f^2$ and since $T$ has property $(N)$ this proves the lemma.
\end{proof}
Using our results on property $(N)$ we get, presumably new, answers
on Dvornicich and Zannier's question.\\

In \cite{PolyTrans} Narkiewicz introduced the so-called property $(P)$ for fields. A field $F$ has the property $(P)$
if for every infinite subset $\Gamma \subset F$ the condition $f(\Gamma)=\Gamma$ for a polynomial $f\in F[x]$ implies $\deg f=1$.
Narkiewicz proposed several problems involving property $(P)$, e.g. the analogue of Bombieri and Zannier's question (\cite{unsolvedproblems} Problem 10 (i)): \it does $\IQ^{(d)}$ have property $(P)$? \rm Or less specifically (\cite{PolyMapp} Problem XVI): \it give a constructive description of fields with property $(P)$. \rm
Dvornicich and Zannier have noticed (\cite{DvornicichZannier2007} p. 534) that for subfields of $\Qbar$ property $(N)$ implies property $(P)$
(see also \cite{DvZa} Theorem 3.1 for a detailed proof).
Hence an affirmative answer on Bombieri and Zannier's question would 
also positively answer Narkiewicz's first problem, and the explicit examples of fields with property $(N)$ shed some light on 
Narkiewicz's second problem. 
But property $(P)$ does not imply property $(N)$ as shown in \cite{DvornicichZannier2007} Theorem 3. However,
Dvornicich and Zannier also remarked 
(\cite{DvornicichZannier2007} p. 533 and \cite{DvZa} Proposition 3.1) that the property $(P)$ already implies the finiteness of the set of preperiodic points under a polynomial map of degree at least $2$.
Kubota and Liardet \cite{KubotaLiardet} proved the existence and Dvornicich and Zannier (\cite{DvornicichZannier2007} Theorem 3) gave 
explicit examples of fields with property $(P)$ that cannot be generated over $\IQ$ by algebraic numbers of bounded degree.
These examples refuted a conjecture of Narkiewicz (\cite{PolyMapp} p.85). Corollary \ref{cor2} provides further examples of such fields 
but, opposed to Dvornicich and Zannier's example, they also have property $(N)$.

\begin{acknowledgements}
The author would like to thank Professor Umberto Zannier for sending him a preprint of \cite{DvZa} and the referee for the example just after Theorem \ref{thmcompfields} and other valuable comments which improved the exposition of this article.
\end{acknowledgements}

\bibliographystyle{amsplain}      
\bibliography{literature}


%
%

\end{document}